\documentclass[11pt]{article}
\usepackage[a4paper,margin=2.7cm]{geometry}
\usepackage[T1]{fontenc}
\usepackage[utf8]{inputenc}
\usepackage{lmodern}
\usepackage{hyperref}

\usepackage{amsmath,amssymb,amsfonts}
\usepackage{amsthm}
\usepackage{mathrsfs}

\theoremstyle{plain}
\newtheorem{theorem}{Theorem}[section]
\newtheorem{lemma}[theorem]{Lemma}
\newtheorem{proposition}[theorem]{Proposition}
\newtheorem{corollary}[theorem]{Corollary}
\newtheorem{problem}[theorem]{Open Problem}

\theoremstyle{definition}
\newtheorem{example}[theorem]{Example}
\newtheorem{remark}[theorem]{Remark}

\theoremstyle{definition}
\newtheorem{definition}[theorem]{Definition}

\begin{document}

\title{A General Framework for Operator Continuity between Riesz Spaces}
\author{Ayşe Uyar\thanks{Email: ayseu@gazi.edu.tr}\\Retired Professor of Mathematics, Istanbul, Türkiye}
\date{}

\maketitle

\begin{abstract}
We introduce and study $\mathcal{FG}$-order continuous operators between
Riesz spaces, based on Tantrawan's generalized (unbounded) order
convergence. This framework includes several familiar classes of
operators and allows their properties to be studied in a unified
setting. We establish order boundedness and lattice-theoretic results;
in particular, we show that strongly order continuous operators are
automatically order bounded and prove that the modulus of an order
bounded $\mathcal{FG}$-order continuous operator remains
$\mathcal{FG}$-order continuous under suitable assumptions. The latter
result gives an affirmative answer to a previously posed problem for
strongly order continuous operators. We also establish a general result
ensuring classical order continuity, from which order continuity
results for several familiar classes of operators follow. Furthermore,
we characterize, under suitable hypotheses, when the space of order
bounded $\mathcal{FG}$-order continuous operators is a band. Finally,
we prove an extension theorem for positive $\mathcal{FG}$-order
continuous operators, providing a counterpart of Veksler's extension
theorem in this framework.
\end{abstract}

\noindent\textbf{Keywords:} $\mathcal{FG}$-order continuous operator; order continuous operator; unbounded order continuous operator; strongly order continuous operator.

\medskip
\noindent\textbf{MSC 2020:} Primary 47B60, 47B65; Secondary 46A40.

\section{Introduction and Preliminaries}

Order convergence and its variants play an important role in the
theory of Riesz spaces and in the study of operators acting between
them. Among these notions, unbounded order convergence has received
considerable attention, partly because of its close connection with
classical modes of convergence such as coordinatewise and almost
everywhere convergence. More recently, Tantrawan \cite{Tantrawan25}
introduced a generalized form of (unbounded) order convergence that
provides a common framework encompassing both order and unbounded
order convergence. The purpose of this paper is to study operator
continuity with respect to this generalized convergence.

Throughout the paper, all Riesz spaces are assumed to be Archimedean,
and, unless otherwise stated, an \emph{operator} between Riesz spaces
means a linear operator.
For terminology and standard results on Riesz spaces and operators,
we refer to
\cite{AliprantisBurkinshaw03,
AliprantisBurkinshaw06,LuxemburgZaanen71, Zaanen83}.

We begin by recalling the convergence notions that will be used
throughout the paper.

\begin{definition}\label{def:convergences}
Let $X$ be a Riesz space and let $(x_\alpha)_{\alpha\in\Gamma}$ be
a net in $X$.

\begin{enumerate}
\item[(i)] The net $(x_\alpha)$ is said to be
\emph{strongly order convergent} (or \emph{so-convergent}) to
$x\in X$ if there exists a net $(z_\alpha)_{\alpha\in\Gamma}$ in
$X$ such that $z_\alpha\downarrow0$ and there exists
$\alpha_0\in\Gamma$ such that
\[
|x_\alpha-x|\leq z_\alpha
\qquad\text{for all }\alpha\geq\alpha_0.
\]
In this case, we write
$x_\alpha\xrightarrow{so}x$.

\item[(ii)] The net $(x_\alpha)$ is said to be
\emph{order convergent} (or \emph{o-convergent}) to $x\in X$ if
there exists a net $(z_\beta)_{\beta\in\Lambda}$ in $X$ such that
$z_\beta\downarrow0$ and, for every $\beta\in\Lambda$, there exists
$\alpha_0\in\Gamma$ such that
\[
|x_\alpha-x|\leq z_\beta
\]
for all $\alpha\geq\alpha_0$. In this case, we write
$x_\alpha\xrightarrow{o}x$.

\item[(iii)] The net $(x_\alpha)$ is said to be
\emph{unbounded order convergent} (or \emph{uo-convergent}) to
$x\in X$ if
\[
|x_\alpha-x|\wedge u\xrightarrow{o}0
\qquad\text{for every }u\in X_+.
\]
In this case, we write $x_\alpha\xrightarrow{uo}x$.

\item[(iv)] If $X$ is an $f$-algebra, the net $(x_\alpha)$ is said
to be \emph{multiplicative order convergent}
(or \emph{mo-convergent}) to $x\in X$ if
\[
|x_\alpha-x|u\xrightarrow{o}0
\qquad\text{for every }u\in X_+.
\]
In this case, we write $x_\alpha\xrightarrow{mo}x$.

\item[(v)] Suppose that the order dual $X^\sim$ separates the points
of $X$. The net $(x_\alpha)$ is said to be
\emph{bidual bounded unbounded order convergent}
(or \emph{bbuo-convergent}) to $x\in X$ if
\[
x_\alpha\xrightarrow{uo}x
\]
and the canonical images $(\widehat{x_\alpha})$ are eventually
order bounded in $X^{\sim\sim}$. In this case, we write
$x_\alpha\xrightarrow{bbuo}x$.
\end{enumerate}
\end{definition}

If $X$ is a Dedekind complete Riesz space, then strong order
convergence is equivalent to order convergence. Clearly,
$x_\alpha\xrightarrow{o}x$ also implies
$x_\alpha\xrightarrow{uo}x$. Order convergence and unbounded order convergence are well-established
notions and have been extensively studied in the literature. For order convergence, we refer to \cite{AbramovichSirotkin05}, and
for further background and fundamental results on unbounded order
convergence to \cite{GaoTroitskyXanthos17, GaoXanthos14}.

More recently, multiplicative order convergence and bbuo-convergence
have been introduced and studied as further variants of order
convergence. For multiplicative order convergence, we refer to
\cite{Aydin20,GurkokTuran25}, and for bbuo-convergence to
\cite{AlpayEmelyanovGorokhova21}.

We now turn to the convergence notion that forms the basis of the
present work. The following notion of convergence was introduced by
Tantrawan \cite{Tantrawan25}.

\begin{definition}\label{def:F-order_convergence}
{\rm \cite[Definition~1]{Tantrawan25}}
Let $\mathcal F$ be a family of increasing subadditive maps
$F:X_+\to X_+$ satisfying $F(0)=0$. A net $(x_\alpha)$ in $X$ is
said to \textit{$\mathcal F$-order converge} to $x\in X$, denoted by
\[
x_\alpha\xrightarrow{\mathcal F\text{-}o}x,
\]
if
\[
F(|x_\alpha-x|)\xrightarrow{o}0
\]
for every $F\in\mathcal F$.
\end{definition}

We denote by $\operatorname{IS}(X)$ the collection of all increasing
subadditive maps $F:X_+\to X_+$ satisfying $F(0)=0$. Thus,
$\mathcal F\subseteq\operatorname{IS}(X)$ in
Definition~\ref{def:F-order_convergence}.

Tantrawan showed that suitable choices of $\mathcal F$ recover order
and unbounded order convergence, while other choices may yield
convergences different from both; see \cite[Example~2]{Tantrawan25}.
Moreover, convergence induced by a locally solid topology can also be
represented as $\mathcal F$-order convergence; see
\cite[Example~3]{Tantrawan25}.

Unlike order and unbounded order convergence, $\mathcal F$-order
convergence need not have unique limits. Tantrawan
\cite[Theorem~4]{Tantrawan25} showed that $\mathcal F$-order limits
are unique if and only if
\[
\bigcap_{F\in\mathcal F}\ker F=\{0\}.
\]
We shall refer to this condition as the \emph{kernel separation
property} of $\mathcal F$.
The basic algebraic and lattice properties of $\mathcal F$-order
convergence were established in
\cite[Proposition~5]{Tantrawan25}. Thus, when appropriate, we may assume without loss of generality
that the net is positive and $\mathcal F$-order converges to zero.

We shall use the following distinguished families. Let
\[
\mathcal I_X:=\{I_{X_+}\},
\qquad
\mathcal T_X:=\{T_u:u\in X_+\},
\]
where
\[
T_u(x)=x\wedge u
\qquad (x\in X_+).
\]
If $X$ is a normed Riesz space, fix $0<u\in X$ and set
\[
\mathcal N_X:=\{N_u\},
\qquad
N_u(x)=\|x\|u.
\]
Finally, if $X$ is an $f$-algebra, we put
\[
\mathcal M_X:=\{M_u:u\in X_+\},
\qquad
M_u(x)=xu.
\]

We refer to $\mathcal I_X$, $\mathcal T_X$, $\mathcal N_X$, and
$\mathcal M_X$ as the identity, truncation, norm-induced, and
multiplication families, respectively. Whenever defined, these are
subfamilies of $\operatorname{IS}(X)$.

The families $\mathcal I_X$, $\mathcal T_X$, and $\mathcal N_X$
have the kernel separation property. The same is true for
$\mathcal M_X$ whenever $X$ is a semiprime $f$-algebra.

For the subclasses of $\operatorname{IS}(X)$, we use the notation
$\operatorname{IS}_o(X)$, $\operatorname{IS}_{\sigma o}(X)$,
$\operatorname{IS}_d(X)$, and $\operatorname{IS}_b(X)$ for the
order continuous, $\sigma$-order continuous, disjointness preserving,
and bounded maps, respectively. Here, following
\cite{Tantrawan25}, a map $F\in\operatorname{IS}(X)$ is called
bounded if there exists $z\in X_+$ such that
\[
F(X_+)\subseteq[0,z].
\]
We also write $\operatorname{IS}_{\vee}(X)$ for the class of maps in
$\operatorname{IS}(X)$ that preserve arbitrary existing suprema.
Clearly, $\mathcal I_X\subseteq\operatorname{IS}_{\vee}(X)$.
Moreover, by the infinite distributive law,
$\mathcal T_X\subseteq\operatorname{IS}_{\vee}(X)$, and, if $X$ is
an $f$-algebra, then
$\mathcal M_X\subseteq\operatorname{IS}_{\vee}(X)$ by
\cite[Proposition~2.9]{AydinEmelyanovGorokhova25}.
Membership of the above families in the subclasses introduced here
is generally immediate from their definitions, although some
properties may require additional assumptions; for instance,
$\mathcal N_X\subseteq\operatorname{IS}_o(X)$ whenever $X$ has
order continuous norm.

A central question associated with these convergence notions is the study of
operators that preserve them. Operators preserving $uo$-convergence are called
\emph{unbounded order continuous} ($uo$-continuous) operators. If $X$ is a
normed Riesz space, an operator $T:X\to Y$ is called
\emph{boundedly unbounded order continuous} (buo-continuous) if
$Tx_\alpha\xrightarrow{uo}0$ whenever $(x_\alpha)$ is a norm bounded
$uo$-null net in $X$. Both classes were introduced in
\cite{BahramnezhadAzar18}; see also the correction
\cite{BahramnezhadAzar19}. The class of $uo$-continuous operators was
further investigated in
\cite{TuranAltinGurkok22,TuranGurkok24,Uyar_WhenRemains}.

An operator $T:X\to Y$ is called \emph{strongly order continuous} if
$Tx_\alpha\xrightarrow{o}0$ whenever $x_\alpha\xrightarrow{uo}0$; such
operators were studied in \cite{Bahramnezhad_Azar}. Operators preserving
multiplicative order convergence, called \emph{multiplicative order continuous}
(mo-continuous) operators, were considered in
\cite{AydinEmelyanovGorokhova25,GurkokTuran25}. To the best of our knowledge,
the corresponding operator class for bbuo-convergence has not previously been
studied. We naturally call an operator preserving bbuo-convergence
\emph{bbuo-continuous}.

Motivated by Tantrawan's generalized order convergence, we introduce
$\mathcal{FG}$-order continuous operators, where the families $\mathcal F$
and $\mathcal G$ determine the modes of convergence on the domain and range,
respectively. Suitable choices of these families recover order continuous,
$uo$-continuous, strongly order continuous, norm continuous, and
$mo$-continuous operators, while buo-continuous and bbuo-continuous operators
are also encompassed by this framework. Throughout the paper, the general
results obtained within this framework are specialized to these particular
classes, recovering or extending known results and yielding new properties of
strongly order continuous, mo-continuous, buo-continuous, and bbuo-continuous
operators.

We provide examples showing that $\mathcal{FG}$-order continuity depends
essentially on the choice of the families. Although it does not imply order
boundedness in general, we establish conditions under which it does; in
particular, every strongly order continuous operator is automatically order
bounded.

We next establish a general supremum-envelope theorem, which serves as a key
tool for studying lattice properties of $\mathcal{FG}$-order continuous
operators. As an application, we show, under suitable assumptions, that
$\mathcal{FG}$-order continuity is preserved under taking moduli and that
$L_{\mathcal{FG}}(X,Y)$ is a Riesz subspace of $L_b(X,Y)$. In particular,
this gives an affirmative answer to a previously posed question concerning
the modulus of strongly order continuous operators.

We then investigate the relationship between $\mathcal{FG}$-order continuity
and classical order continuity. We give conditions under which an order
bounded $\mathcal{FG}$-order continuous operator is order continuous and,
through the specializations of the framework, obtain order continuity results
for the corresponding classes of operators. In particular, we extend a known
result for $uo$-continuous operators by removing the Dedekind completeness
assumption on the range space.

We further investigate the order structure of $L_{\mathcal{FG}}(X,Y)$.
Under suitable assumptions, we show that $L_{\mathcal{FG}}(X,Y)$ is an ideal
of $L_b(X,Y)$ and obtain finite-dimensional characterizations in terms of its
band property and its equality with $L_n(X,Y)$. Finally, in the spirit of
Veksler's classical extension theorem, we establish an extension theorem for
positive $\mathcal{FG}$-order continuous operators and characterize
$\mathcal{FG}$-order continuity on ideals.

\section{$\mathcal{FG}$-Order Continuous Operators}

In this section, we introduce $\mathcal{FG}$-order continuous
operators and their sequential counterparts, and study their basic
properties and relationships with several familiar classes of
operators on Riesz spaces.

\begin{definition}\label{def:FG-order_continuous}
Let $X$ and $Y$ be Riesz spaces, and let
$\mathcal F\subseteq\operatorname{IS}(X)$ and
$\mathcal G\subseteq\operatorname{IS}(Y)$. An operator $T:X\to Y$
is said to be
\begin{enumerate}
\item[(i)] \emph{$\mathcal{FG}$-order continuous} if
\[
x_\alpha\xrightarrow{\mathcal F\text{-}o}0
\quad\Longrightarrow\quad
Tx_\alpha\xrightarrow{\mathcal G\text{-}o}0.
\]

\item[(ii)] \emph{$\mathcal{FG}\sigma$-order continuous} if
\[
x_n\xrightarrow{\mathcal F\text{-}o}0
\quad\Longrightarrow\quad
Tx_n\xrightarrow{\mathcal G\text{-}o}0.
\]
\end{enumerate}
\end{definition}

The space of all order bounded $\mathcal{FG}$-order continuous
operators is denoted by $L_{\mathcal{FG}}(X,Y)$; that is,
\[
L_{\mathcal{FG}}(X,Y)
=
\{T\in L_b(X,Y):T\text{ is }\mathcal{FG}\text{-order continuous}\}.
\]
Similarly, $L_{\mathcal{FG}\sigma}(X,Y)$ denotes the space of
all order bounded $\mathcal{FG}\sigma$-order continuous operators
from $X$ to $Y$. Clearly,
\[
L_{\mathcal{FG}}(X,Y)
\subseteq
L_{\mathcal{FG}\sigma}(X,Y),
\]
and both are ordered vector subspaces of $L_b(X,Y)$.
For brevity, $L_{\mathcal{FG}}(X,X)$ and
$L_{\mathcal{FG}\sigma}(X,X)$ will be denoted by
$L_{\mathcal{FG}}(X)$ and $L_{\mathcal{FG}\sigma}(X)$,
respectively.

For a family $\mathcal F\subseteq\operatorname{IS}(X)$, we define
the \emph{$\mathcal F$-order continuous dual} of $X$ by
\[
X^\sim_{\mathcal F}
=
\left\{
f\in X^\sim:
x_\alpha\xrightarrow{\mathcal F\text{-}o}0
\Longrightarrow
f(x_\alpha)\xrightarrow{o}0
\right\}.
\]
Similarly, the \emph{$\mathcal F\sigma$-order continuous dual} of
$X$ is defined by
\[
X^\sim_{\mathcal F\sigma}
=
\left\{
f\in X^\sim:
x_n\xrightarrow{\mathcal F\text{-}o}0
\Longrightarrow
f(x_n)\xrightarrow{o}0
\right\}.
\]

The following example shows how several familiar classes of
continuous operators arise from, or are related to,
$\mathcal{FG}$-order continuous operators.

\begin{example}\label{ex:special_cases_FG}
\begin{enumerate}

\item[(i)] For
\[
\mathcal F=\mathcal I_X
\qquad\text{and}\qquad
\mathcal G=\mathcal I_Y,
\]
$\mathcal{FG}$-order continuity coincides with order continuity.
Hence
\[
L_{\mathcal{FG}}(X,Y)=L_n(X,Y).
\]
Similarly,
\[
L_{\mathcal{FG}\sigma}(X,Y)=L_c(X,Y).
\]

\item[(ii)] For
\[
\mathcal F=\mathcal T_X
\qquad\text{and}\qquad
\mathcal G=\mathcal T_Y,
\]
$\mathcal{FG}$-order continuity coincides with $uo$-continuity.
Hence
\[
L_{\mathcal{FG}}(X,Y)=L_{uo}(X,Y).
\]

\item[(iii)] For
\[
\mathcal F=\mathcal T_X
\qquad\text{and}\qquad
\mathcal G=\mathcal I_Y,
\]
$\mathcal{FG}$-order continuity coincides with strongly order
continuity.

\item[(iv)] Let $X$ and $Y$ be normed Riesz spaces. For
\[
\mathcal F=\mathcal N_X
\qquad\text{and}\qquad
\mathcal G=\mathcal N_Y,
\]
$\mathcal{FG}$-order continuity coincides with norm continuity.

\item[(v)] Let $X$ and $Y$ be $f$-algebras. For
\[
\mathcal F=\mathcal M_X
\qquad\text{and}\qquad
\mathcal G=\mathcal M_Y,
\]
$\mathcal{FG}$-order continuous operators are precisely the
$mo$-continuous operators.

\item[(vi)] Let $X$ be a normed Riesz space and $Y$ a Riesz space.
For
\[
\mathcal F=\mathcal T_X\cup\mathcal N_X
\qquad\text{and}\qquad
\mathcal G=\mathcal T_Y,
\]
every boundedly unbounded order continuous (buo-continuous) operator
in the sense of \cite{BahramnezhadAzar18} is
$\mathcal{FG}$-order continuous.
Similarly, every $bu\sigma o$-continuous operator is
$\mathcal{FG}\sigma$-order continuous.

\item[(vii)] Suppose that $X^\sim$ and $Y^\sim$ separate the points
of $X$ and $Y$, respectively. For
\[
\mathcal F=\mathcal I_X
\qquad\text{and}\qquad
\mathcal G=\mathcal T_Y,
\]
every bbuo-continuous operator is $\mathcal{FG}$-order continuous.

\end{enumerate}
\end{example}

The preceding relationships are summarized in the following table.
\[
\boxed{
\begin{array}{ccl}
(\mathcal I_X,\mathcal I_Y)
&\Longleftrightarrow&
\text{order continuous},
\\[2mm]
(\mathcal T_X,\mathcal T_Y)
&\Longleftrightarrow&
uo\text{-continuous},
\\[2mm]
(\mathcal T_X,\mathcal I_Y)
&\Longleftrightarrow&
\text{strongly order continuous},
\\[2mm]
(\mathcal N_X,\mathcal N_Y)
&\Longleftrightarrow&
\text{norm continuous},
\\[2mm]
(\mathcal M_X,\mathcal M_Y)
&\Longleftrightarrow&
mo\text{-continuous}.
\end{array}
}
\]

For the bounded variants, we have the following implications:
\[
\boxed{
\begin{array}{ccl}
buo\text{-continuous}
&\Longrightarrow&
(\mathcal T_X\cup\mathcal N_X,\mathcal T_Y)
\text{-order continuous},
\\[2mm]
bbuo\text{-continuous}
&\Longrightarrow&
(\mathcal I_X,\mathcal T_Y)
\text{-order continuous}.
\end{array}
}
\]
In particular, since order convergence and $uo$-convergence
coincide in $\mathbb R$, the $uo$-continuous and strongly order
continuous duals coincide. Thus,
\[
X^\sim_{\mathcal T_X}
=
X^\sim_{uo}
=
X^\sim_{so}.
\]

The following example shows that the same operator may be $\mathcal{FG}$-order continuous for some choices of $\mathcal F$ and $\mathcal G$, but fail to be so for other choices.

\begin{example}\label{ex:Fourier_FG}
Consider the classical Fourier coefficient operator
\[
T:L_1[0,1]\longrightarrow c_0,\qquad
T(f)=
\left(
\int_0^1 f(x)\sin x\,dx,\,
\int_0^1 f(x)\sin 2x\,dx,\ldots
\right).
\]
It is known that $T$ is norm continuous but not order bounded
\cite[Exercise~10, Section~5.1]{AliprantisBurkinshaw06}.

Since $T$ is norm continuous, Example~\ref{ex:special_cases_FG}
implies that $T$ is $\mathcal{FG}$-order continuous for
\[
\mathcal F=\mathcal N_{L_1[0,1]}
\qquad\text{and}\qquad
\mathcal G=\mathcal N_{c_0}.
\]

On the other hand, since $T$ is not order bounded, it is not order
continuous. Hence, by Example~\ref{ex:special_cases_FG}, $T$ is not
$\mathcal{FG}$-order continuous for
\[
\mathcal F=\mathcal I_{L_1[0,1]}
\qquad\text{and}\qquad
\mathcal G=\mathcal I_{c_0}.
\]

Moreover, $T$ is not $uo$-continuous. Indeed, let
\[
I_k=
\left(
\frac12+2^{-k-1},
\frac12+2^{-k}
\right],
\qquad
f_k=2^{k+1}\chi_{I_k}.
\]
The functions $f_k$ are pairwise disjoint, and hence
\[
f_k\xrightarrow{uo}0
\qquad\text{in }L_1[0,1].
\]
However,
\[
(Tf_k)_1
=
2^{k+1}\int_{I_k}\sin x\,dx
\geq
\sin\frac12>0.
\]
Thus, $(Tf_k)$ does not converge coordinatewise to zero and therefore
does not $uo$-converge to zero in $c_0$. Hence $T$ is not
$uo$-continuous. By Example~\ref{ex:special_cases_FG}, $T$ is not
$\mathcal{FG}$-order continuous for
\[
\mathcal F=\mathcal T_{L_1[0,1]}
\qquad\text{and}\qquad
\mathcal G=\mathcal T_{c_0}.
\]

The same sequence shows that $T$ is not strongly order continuous,
since $Tf_k$ cannot order converge to zero. Consequently, $T$ is not
$\mathcal{FG}$-order continuous for
\[
\mathcal F=\mathcal T_{L_1[0,1]}
\qquad\text{and}\qquad
\mathcal G=\mathcal I_{c_0}.
\]

Finally, since $L_1[0,1]$ has order continuous norm, order convergence
in $L_1[0,1]$ implies norm convergence. Thus, if
\[
f_\alpha\xrightarrow{o}0,
\]
then
\[
\|f_\alpha\|_1\longrightarrow0.
\]
By the norm continuity of $T$,
\[
\|Tf_\alpha\|_\infty\longrightarrow0,
\]
and norm convergence in $c_0$ implies $uo$-convergence. Hence
\[
Tf_\alpha\xrightarrow{uo}0.
\]
Therefore, $T$ is $\mathcal{FG}$-order continuous for
\[
\mathcal F=\mathcal I_{L_1[0,1]}
\qquad\text{and}\qquad
\mathcal G=\mathcal T_{c_0}.
\]
\end{example}

Thus, the $\mathcal{FG}$-order continuity of an operator may depend
essentially on the choice of the families $\mathcal F$ and $\mathcal G$.
Moreover, the example also shows that $\mathcal{FG}$-order continuity
alone does not imply order boundedness. In particular, the
order-boundedness assumption in the definition of
$L_{\mathcal{FG}}(X,Y)$ is not redundant.

We shall use the following terminology. A family
$\mathcal F\subseteq\operatorname{IS}(X)$ is said to be
\emph{disjointly null} if every disjoint sequence in $X$ is
$\mathcal F$-order null. By \cite[Proposition~7]{Tantrawan25},
every family
\[
\mathcal F
\subseteq
\operatorname{IS}_b(X)\cap\operatorname{IS}_d(X)
\]
is disjointly null. More generally, the same conclusion holds
whenever every disjoint sequence in $X$ is order bounded and
$\mathcal F\subseteq\operatorname{IS}_d(X)$.

\begin{example}\label{ex:dual_operator}
Let $X=\ell_1$ and let $Y=s$ be the Riesz space of all real-valued
sequences equipped with the pointwise order. Consider the operator
\[
T:X\to Y,\qquad
T(x_n)=
\left(
x_1,x_1+x_2,\ldots,\sum_{i=1}^{n}x_i,\ldots
\right).
\]
Since $T$ is positive, its adjoint
$T^{\sim}:s^{\sim}\to\ell_1^{\sim}$ is order bounded and order
continuous \cite[Theorem~1.73]{AliprantisBurkinshaw06}.

We show that $T^{\sim}$ is not $\mathcal{FG}$-order continuous
for any disjointly null family
\[
\mathcal F\subseteq\operatorname{IS}(Y^\sim)
\]
and any
\[
\mathcal G\subseteq\operatorname{IS}_{\vee}(X^\sim)
\]
having the kernel separation property.

Indeed, for each $n\in\mathbb N$, define
\[
f_n:s\to\mathbb R,\qquad f_n(x)=x_n,
\]
where $x=(x_k)\in s$. Then $f_n\in s^\sim$, and $(f_n)$ is a
disjoint sequence. Since $\mathcal F$ is disjointly null,
\[
f_n\xrightarrow{\mathcal F\text{-}o}0.
\]

Let $e=(1,1,\ldots)$. Since
\[
T^\sim(f_n)\uparrow e
\quad\text{in }\ell_\infty\cong\ell_1^\sim,
\]
the monotonocity and the arbitrary-supremum preserving property of each
$G\in\mathcal G$ yield
\[
G\bigl(T^\sim(f_n)\bigr)\uparrow G(e).
\]

Thus,
\[
T^\sim(f_n)\xrightarrow{\mathcal G\text{-}o}e.
\]
Since $\mathcal G$ satisfies the kernel separation condition, the
$\mathcal G$-order limit is unique. As $e\neq0$,
$(T^\sim(f_n))$ is not $\mathcal G$-order null. Consequently,
$T^\sim$ is not $\mathcal{FG}$-order continuous.
\end{example}

In particular, taking
$\mathcal F=\mathcal T_{Y^\sim}$ and
$\mathcal G=\mathcal T_{X^\sim}$,
the $uo$-continuous case in
\cite[Example~3.6]{TuranAltinGurkok22}
is recovered.

We conclude this section with examples in which the families
$\mathcal F$ and $\mathcal G$ induce convergences different from
the classical ones.

\begin{example}\label{ex:direct_sum_FG}
Let $X=\ell_1\oplus\ell_1$ with the coordinatewise order, and let
\[
\mathcal F=\{F_z:z\in(\ell_1)_+\},
\qquad
F_z(x,y)=(x,y\wedge z).
\]
As observed in \cite[Example~2]{Tantrawan25}, the corresponding
$\mathcal F$-order convergence is, in general, neither order
convergence nor unbounded order convergence. Moreover,
\[
\bigcap_{F\in\mathcal F}\ker(F)=\{0\}.
\]
Taking $\mathcal G=\mathcal F$, the operator
\[
T:X\to X,\qquad T(x,y)=(0,y),
\]
is $\mathcal{FG}$-order continuous.
\end{example}

\begin{example}\label{ex:bounded_FG_families}
Let $X=\ell_1$, and let
\[
S(x_1,x_2,\ldots)=(0,x_1,x_2,\ldots).
\]
For $z\in X_+$, define
\[
F_z(x)=S(x)\wedge z,
\qquad
G_z(x)=S^2(x)\wedge z,
\]
and put
\[
\mathcal F=\{F_z:z\in X_+\},
\qquad
\mathcal G=\{G_z:z\in X_+\}.
\]
Then
\[
\bigcap_{F\in\mathcal F}\ker(F)
=
\bigcap_{G\in\mathcal G}\ker(G)
=
\{0\}.
\]
The shift operator $T:X\to X$, $T=S$, is
$\mathcal{FG}$-order continuous.
\end{example}

\section{Order Boundedness}

Abramovich and Sirotkin proved that every order continuous operator
between Riesz spaces is order bounded
\cite[Theorem~2.1]{AbramovichSirotkin05}. In contrast,
Example~\ref{ex:Fourier_FG} shows that
$\mathcal{FG}$-order continuity alone does not imply order boundedness.
We now give a sufficient condition under which order boundedness is
recovered.

We first recall a condition ensuring the linearity of
$\mathcal F$-order convergence. By
\cite[Proposition~5]{Tantrawan25}, $\mathcal F$-order convergence
is locally solid additive. Hence, by \cite[Theorem~2.1]{Bilokopytov23},
it is linear if and only if
\[
\frac{1}{n}x\xrightarrow{\mathcal F\text{-}o}0
\qquad\text{for every }x\in X_+.
\]
In particular, if
$\mathcal F\subseteq\operatorname{IS}_{\sigma o}(X)$, then
$\mathcal F$-order convergence is linear
\cite[Corollary~6]{Tantrawan25}.

The proof of the following theorem uses a blocking argument based on
a lexicographically ordered index set. Lexicographic combinations of
partially ordered sets were considered by Birkhoff \cite{Birkhoff37}.
In the context of order convergence, Wolk \cite{Wolk61} used such a
lexicographic construction as a blocking argument for nets. A similar
argument was later employed by Abramovich and Sirotkin
\cite[Theorem~2.1]{AbramovichSirotkin05}.

\begin{theorem}\label{thm:FG-continuous_order_bounded}
We use the lexicographic blocking argument described above. Let $X$ and $Y$ be Riesz spaces, and let
$\mathcal F\subseteq\operatorname{IS}(X)$. Suppose that
$\mathcal F$-order convergence on $X$ is linear. If an operator
$T:X\to Y$ satisfies
\[
x_\alpha\xrightarrow{\mathcal F\text{-}o}0
\quad\Longrightarrow\quad
Tx_\alpha\xrightarrow{o}0
\]
for every net $(x_\alpha)$ in $X$, then $T$ is order bounded.
\end{theorem}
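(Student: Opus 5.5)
We argue by contradiction, following the Abramovich--Sirotkin blocking argument. Suppose $T$ is not order bounded. Then there is $u\in X_+$ such that $T[-u,u]$ is not order bounded in $Y$. The aim is to build a net $(w_\gamma)$ in $X$ with $w_\gamma\xrightarrow{\mathcal F\text{-}o}0$ while $(Tw_\gamma)$ is not even eventually order bounded. This contradicts the hypothesis, since an order null net is eventually order bounded.

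First we choose the index set. Let $\Lambda$ be the collection of all pairs $(n,y)$ with $n\in\mathbb N$ and $y\in Y_+$. Because $T[-u,u]$ is not order bounded, for each $(n,y)$ there is $x_{n,y}\in[-u,u]$ such that $|Tx_{n,y}|\not\le n^{2}y$. We will set
\[
w_{n,y}=\tfrac1n x_{n,y},
\]
so that $|w_{n,y}|\le \tfrac1n u$ and $|Tw_{n,y}|\not\le n y$. We direct $\Lambda$ lexicographically: $(n,y)\le(m,y')$ if $n<m$, or if $n=m$ and $y\le y'$. This is a directed preorder, and it implements the lexicographic blocking.

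Next we check $\mathcal F$-order convergence. Linearity of $\mathcal F$-order convergence gives $\tfrac1n u\xrightarrow{\mathcal F\text{-}o}0$ along $\mathbb N$, that is, $F(\tfrac1n u)\xrightarrow{o}0$ for each $F\in\mathcal F$. Since $F$ is increasing, $F(|w_{n,y}|)\le F(\tfrac1n u)$. Moreover, the map $(n,y)\mapsto n$ is cofinal and monotone from $\Lambda$ onto $\mathbb N$. Hence the dominating net $(F(\tfrac1n u))_{(n,y)\in\Lambda}$ is order null along $\Lambda$: a sequence $z_k\downarrow0$ that dominates eventually in $n$ also dominates eventually in $\Lambda$. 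It follows that $w_{n,y}\xrightarrow{\mathcal F\text{-}o}0$, and by hypothesis $Tw_{n,y}\xrightarrow{o}0$.

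Finally we derive the contradiction. Order convergence to $0$ yields some $z\in Y_+$ (the first $z_\beta$) and an index $(n_0,y_0)$ such that $|Tw_{n,y}|\le z$ for all $(n,y)\ge(n_0,y_0)$. Take $n=n_0+1$ and $y=z$. Then $(n,z)\ge(n_0,y_0)$, so $|Tw_{n,z}|\le z\le nz$. This contradicts the choice $|Tw_{n,z}|\not\le nz$.

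The main point to get right is the combination of the choice of $\Lambda$ and the transfer of the order-nullness of a sequence to the net. The first component must force $|w|$ to shrink, and the second must defeat every candidate order bound $z\in Y_+$. Once the lexicographic ordering is in place, each tail of $\Lambda$ contains all pairs $(n,y)$ with $n$ large and $y$ arbitrary, and this is exactly what the contradiction step uses. If the preorder on $\Lambda$ causes technical trouble, we will pass to the product $\mathbb N\times Y_+$ with the product order, since the same tail property holds there.
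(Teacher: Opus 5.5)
Your proof is correct, but it is organized differently from the paper's. The paper argues directly. For an arbitrary interval $[0,b]$ it indexes the net by $\mathbb N\times[0,b]$ with the lexicographic order and sets $x_{(n,y)}=\frac1n y$, so no ``bad'' elements have to be chosen. Once $Tx_{(n,y)}\xrightarrow{o}0$ gives a bound $z_\beta$ on a tail starting at some $(n,y)$, the whole level $\{n+1\}\times[0,b]$ lies in that tail. This yields the explicit estimate $|Tu|\le (n+1)z_\beta$ for all $u\in[0,b]$.

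You instead argue by contradiction and put the range into the index set. For each candidate bound $y\in Y_+$ you choose an element of $[-u,u]$ whose image escapes $n^2y$. The contradiction then comes from the fact that an order null net is eventually dominated by a single $z$, which you feed back in as the second coordinate.

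One consequence is that in your version the lexicographic blocking is not actually essential. With the product order on $\mathbb N\times Y_+$ the same argument works if you take $y=z\vee y_0$ instead of $y=z$. So your fallback is sound, although under the product order a tail does not contain all pairs with large $n$, as you claim. It only contains pairs whose second coordinate dominates $y_0$, which is all you need. In the paper's version the lexicographic order is indispensable: under the product order a tail starting at $(n,b)$ consists only of pairs $(m,b)$ and controls nothing beyond multiples of $Tb$.

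Two minor wording points. The witness for order convergence of $(F(\frac1n u))_n$ is in general a net $(z_\beta)$ rather than a sequence, and a net need not have a ``first'' $z_\beta$. In both places your argument goes through verbatim with an arbitrary index $\beta$.
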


\begin{proof}
Let $[0,b]\subseteq X$ be arbitrary. Set
\[
\Lambda=\mathbb N\times[0,b],
\]
equipped with the lexicographical order, that is,
\[
(n,y)\le (m,z)
\quad\text{if and only if}\quad
n<m
\quad\text{or}\quad
\bigl(n=m \text{ and } y\le z\bigr).
\]
Define
\[
x_{(n,y)}=\frac{1}{n}y,
\qquad (n,y)\in\Lambda.
\]
If $(m,y)\geq(n,b)$, then $m\geq n$, and hence
\[
0\leq x_{(m,y)}
=\frac{1}{m}y
\leq \frac{1}{n}b.
\]
Since $\mathcal F$-order convergence is linear,
$\frac{1}{n}b\xrightarrow{\mathcal F\text{-}o}0$. It follows from
\cite[Proposition~5(iv)]{Tantrawan25} that
\[
x_{(n,y)}\xrightarrow{\mathcal F\text{-}o}0.
\]

By hypothesis,
$Tx_{(n,y)}\xrightarrow{o}0$. Hence there exists a net
$(z_\beta)$ in $Y_+$ with $z_\beta\downarrow0$ such that, for every
$\beta$, there is $(n,y)\in\Lambda$ for which
\[
|Tx_{(m,u)}|\leq z_\beta
\qquad\text{whenever }(m,u)\geq(n,y).
\]
Fix $\beta$ and the corresponding $(n,y)$. For every $u\in[0,b]$,
we have $(n+1,u)\geq(n,y)$, and therefore
\[
\left|T\left(\frac{1}{n+1}u\right)\right|
\leq z_\beta.
\]
By linearity of $T$,
\[
|Tu|\leq(n+1)z_\beta
\qquad (u\in[0,b]).
\]
Thus $T([0,b])$ is order bounded. Since $[0,b]$ was arbitrary,
$T$ is order bounded.
\end{proof}

\begin{corollary}\label{cor:so_order_bounded}
Every strongly order continuous operator between Riesz spaces is
order bounded.
\end{corollary}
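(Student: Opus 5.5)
The corollary follows by specializing Theorem~\ref{thm:FG-continuous_order_bounded} to $\mathcal F=\mathcal T_X$. By Example~\ref{ex:special_cases_FG}(iii), a strongly order continuous operator $T:X\to Y$ is exactly one satisfying
\[
x_\alpha\xrightarrow{\mathcal T_X\text{-}o}0
\quad\Longrightarrow\quad
Tx_\alpha\xrightarrow{o}0,
\]
because $\mathcal T_X$-order convergence is $uo$-convergence. This is precisely the hypothesis of the theorem with $\mathcal F=\mathcal T_X$. So the only remaining hypothesis to check is that $\mathcal T_X$-order convergence on $X$ is linear.

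For linearity I will use the criterion recalled before the theorem, namely \cite[Theorem~2.1]{Bilokopytov23} applied to the locally solid additive convergence from \cite[Proposition~5]{Tantrawan25}. It reduces the question to showing $\frac1n x\xrightarrow{\mathcal T_X\text{-}o}0$ for every $x\in X_+$. The most direct route is to note that $\frac1n x\downarrow0$ in $X$ by the Archimedean property. Then for each $u\in X_+$ we have
\[
0\le \tfrac1n x\wedge u\le \tfrac1n x\downarrow0,
\]
so $\frac1n x\wedge u\xrightarrow{o}0$, which is the required statement.

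Alternatively, one can observe that each $T_u$ is order continuous, since by the infinite distributive law $\mathcal T_X\subseteq\operatorname{IS}_{\vee}(X)$, so $\mathcal T_X\subseteq\operatorname{IS}_{\sigma o}(X)$. Linearity then follows from \cite[Corollary~6]{Tantrawan25}.

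With linearity established, Theorem~\ref{thm:FG-continuous_order_bounded} applies directly and gives that $T$ is order bounded. I do not expect any real obstacle. The only point needing care is the identification of strong order continuity with the theorem's hypothesis: the target convergence must be order convergence in $Y$, not $\mathcal I_Y$-order convergence in some weaker sense. These agree by definition, since $\mathcal I_Y$ consists of the identity on $Y_+$.
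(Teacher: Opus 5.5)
Your proposal is correct and follows the paper's proof: both specialize Theorem~\ref{thm:FG-continuous_order_bounded} to $\mathcal F=\mathcal T_X$ (target $\mathcal I_Y$, i.e.\ order convergence), the only difference being that you verify linearity explicitly via $0\le\frac1n x\wedge u\le\frac1n x\downarrow0$, whereas the paper simply cites linearity of $uo$-convergence. One small slip is in your alternative route: membership in $\operatorname{IS}_{\vee}(X)$ does not by itself give $\sigma$-order continuity (for instance, $F(0)=0$ and $F(t)=1$ for $t>0$ on $\mathbb R_+$ preserves all existing suprema, yet $F(1/n)\not\to0$), so the order continuity of $x\mapsto x\wedge u$ should instead be justified directly, e.g.\ from $|x\wedge u-y\wedge u|\le|x-y|$.
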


\begin{proof}
By Example~\ref{ex:special_cases_FG}, strongly order continuous
operators correspond to
\[
\mathcal F=\mathcal T_X,
\qquad
\mathcal G=\mathcal I_Y.
\]
Since $uo$-convergence, and hence $\mathcal T_X$-order convergence,
is linear, the result follows from
Theorem~\ref{thm:FG-continuous_order_bounded}.
\end{proof}

Therefore, the order
boundedness assumption in the definition of $L_{so}(X,Y)$ is
redundant, and we may simply write
\[
L_{so}(X,Y)
=
\{T:X\to Y : T \text{ is strongly order continuous}\}.
\]

\begin{corollary}\label{cor:mo_order_bounded}
Let $X$ be an $f$-algebra and let $Y$ be a unital $f$-algebra.
Then every $mo$-continuous operator $T:X\to Y$ is order bounded.
\end{corollary}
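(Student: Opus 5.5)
The plan is to reduce the corollary to Theorem~\ref{thm:FG-continuous_order_bounded} with $\mathcal F=\mathcal M_X$. There are two things to check. First, $\mathcal M_X$-order convergence on $X$ must be linear. Second, every $mo$-null net $(Tx_\alpha)$ in $Y$ must in fact be order null. Once both hold, the hypothesis of the theorem is satisfied: by Example~\ref{ex:special_cases_FG}(v), $mo$-continuity of $T$ gives
\[
x_\alpha\xrightarrow{\mathcal M_X\text{-}o}0
\quad\Longrightarrow\quad
Tx_\alpha\xrightarrow{mo}0
\quad\Longrightarrow\quad
Tx_\alpha\xrightarrow{o}0,
\]
and the theorem then yields order boundedness of $T$.

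For the first point, I will use the criterion recalled before the theorem, namely \cite[Theorem~2.1]{Bilokopytov23} together with \cite[Proposition~5]{Tantrawan25}. It suffices to show that $\frac1n x\xrightarrow{\mathcal M_X\text{-}o}0$ for every $x\in X_+$. For each $u\in X_+$ we have
\[
M_u\!\left(\tfrac1n x\right)=\tfrac1n\,xu\downarrow0,
\]
because multiplication in an $f$-algebra is bilinear and $X$ is Archimedean. Hence the required convergence holds for every $M_u\in\mathcal M_X$. Alternatively, one can note that $\mathcal M_X\subseteq\operatorname{IS}_{\vee}(X)$, so each $M_u$ is order continuous, and then apply \cite[Corollary~6]{Tantrawan25}.

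For the second point, let $e$ be the multiplicative unit of $Y$. In a unital $f$-algebra $e$ is positive, and $|y|e=|y|$ for every $y\in Y$. If $Ty_\alpha\xrightarrow{mo}0$, then taking $u=e$ in the definition of $mo$-convergence gives
\[
|Ty_\alpha|=|Ty_\alpha|\,e\xrightarrow{o}0,
\]
so $Ty_\alpha\xrightarrow{o}0$. This is where unitality of $Y$ is used. The main subtlety is simply to confirm that $e\in Y_+$, which is standard in Archimedean unital $f$-algebras.
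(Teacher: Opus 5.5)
Your proposal is correct and follows the paper's proof. Like the paper, you reduce to Theorem~\ref{thm:FG-continuous_order_bounded} by verifying $\frac1n x\xrightarrow{\mathcal M_X\text{-}o}0$ via the Archimedean property, which gives linearity of $\mathcal M_X$-order convergence through Bilokopytov's criterion. You then use unitality of $Y$: taking $u=e\ge 0$ shows that $mo$-null nets in $Y$ are order null, and this one direction is all that the paper's remark that $\mathcal M_Y$-order convergence coincides with order convergence is actually needed for.
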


\begin{proof}
By Example~\ref{ex:special_cases_FG}, $mo$-continuity corresponds
to the multiplication families $\mathcal M_X$ and $\mathcal M_Y$.
By \cite{Aydin20}, $mo$-convergence is compatible with addition and
the lattice operations. Moreover, every quasi-subnet of an
$mo$-convergent net is $mo$-convergent to the same limit, since
order convergence has this property. Since $X$ is Archimedean,
$\frac{1}{n}x\xrightarrow{o}0$, and hence
$\frac{1}{n}x\xrightarrow{mo}0$ for every $x\in X_+$.
Thus, by \cite[Theorem~2.1]{Bilokopytov23},
$\mathcal M_X$-order convergence is linear.
Moreover, since $Y$ is unital, $\mathcal M_Y$-order convergence
coincides with order convergence. Hence the result follows from
Theorem~\ref{thm:FG-continuous_order_bounded}.
\end{proof}

\begin{remark}\label{rem:sequential_order_boundedness}
The sequential analogue of
Theorem~\ref{thm:FG-continuous_order_bounded} fails in general.
Indeed, for $\mathcal F=\mathcal I_X$, its sequential hypothesis
reduces to $\sigma$-order continuity. Such an analogue would therefore
imply that every $\sigma$-order continuous operator is order bounded,
which is false; see
\cite[Remark~2.2]{AbramovichSirotkin05}.
\end{remark}

\section{Lattice Properties}

To study the lattice properties of $L_{\mathcal{FG}}(X,Y)$, we establish
the following result, which plays a fundamental role in the study of
$\mathcal{FG}$-order continuity. Notably, no linearity or additivity
assumption is imposed on the mapping involved, making the result
applicable beyond the linear setting. The proof again uses the
lexicographic blocking argument described in Section~3.

\begin{theorem}\label{thm:supremum-envelope}
Let $X$ and $Y$ be Riesz spaces, with $Y$ Dedekind complete, and let
$Z$ be a Riesz subspace of $X$. Let
\[
\mathcal F\subseteq \operatorname{IS}(X)
\quad\text{and}\quad
\mathcal G\subseteq \operatorname{IS}_{\vee}(Y)
\]
satisfy
\[
\bigcap_{G\in\mathcal G}\ker(G)=\{0\}.
\]
Suppose that $\Phi:Z_+\to Y_+$ has the following property: for every
net $(z_\lambda)$ in $Z_+$,
\[
z_\lambda\xrightarrow{\mathcal F\text{-}o}0
\quad\text{in }X
\quad\Longrightarrow\quad
\Phi(z_\lambda)\xrightarrow{\mathcal G\text{-}o}0
\quad\text{in }Y.
\]
Assume also that, for every $x\in X_+$, the set
\[
\{\Phi(z):z\in Z_+,\ z\leq x\}
\]
is order bounded in $Y$. Define
\[
\widehat{\Phi}(x)
=
\sup\{\Phi(z):z\in Z_+,\ z\leq x\},
\qquad x\in X_+.
\]
Then, for every net $(x_\alpha)$ in $X_+$,
\[
x_\alpha\xrightarrow{\mathcal F\text{-}o}0
\quad\Longrightarrow\quad
\widehat{\Phi}(x_\alpha)
\xrightarrow{\mathcal G\text{-}o}0.
\]
\end{theorem}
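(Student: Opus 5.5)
The plan is to reduce the statement to a single application of the hypothesis on $\Phi$, using the lexicographic blocking argument of Section~3. Fix a net $(x_\alpha)_{\alpha\in\Gamma}$ in $X_+$ with $x_\alpha\xrightarrow{\mathcal F\text{-}o}0$, and fix $G\in\mathcal G$.

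The first step rewrites $G(\widehat\Phi(x))$ as a supremum. Since $Y$ is Dedekind complete and $\{\Phi(z):z\in Z_+,\ z\le x\}$ is order bounded, $\widehat\Phi(x)$ exists. Since $G\in\operatorname{IS}_\vee(Y)$ preserves existing suprema,
\[
G\bigl(\widehat\Phi(x)\bigr)=\sup\{G(\Phi(z)):z\in Z_+,\ z\le x\}.
\]
So it suffices to bound $G(\Phi(z))$ uniformly over all $z\le x_\alpha$, for all large $\alpha$.

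To rule out a largest element in $\Gamma$, I first replace $\Gamma$ by the product directed set $\Gamma\times\mathbb N$ and set $x_{(\alpha,n)}=x_\alpha$. This net is still $\mathcal F$-order null, and $\mathcal G$-order nullity of its image is equivalent to that of the original net. I then assume $\Gamma$ has no maximal element, so for every $\alpha_0$ there is $\alpha_1$ with $\alpha_1\ge\alpha_0$ and $\alpha_1\ne\alpha_0$. Now set
\[
\Lambda=\{(\alpha,z):\alpha\in\Gamma,\ z\in Z_+,\ z\le x_\alpha\},
\]
with the lexicographic order: $(\alpha,z)\le(\beta,w)$ iff either $\alpha\le\beta$ with $\alpha\neq\beta$, or $\alpha=\beta$ and $z\le w$. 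This set is directed: given two indices, choose $\gamma$ strictly above both first coordinates, and pair it with $0\in Z_+$. Define $z_{(\alpha,z)}=z$.

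If $(\beta,w)\ge(\alpha,z)$, then $\beta\ge\alpha$ and $0\le w\le x_\beta$. Hence the net $(z_{(\alpha,z)})$ is dominated by a quasi-subnet-type rearrangement of $(x_\alpha)$. By solidity and monotonicity of $\mathcal F$-order convergence \cite[Proposition~5]{Tantrawan25}, as used in the proof of Theorem~\ref{thm:FG-continuous_order_bounded}, it follows that $z_{(\alpha,z)}\xrightarrow{\mathcal F\text{-}o}0$ in $X$. Checking this domination step carefully, directly from the definition of order convergence, is the main technical point. Concretely, from $F(x_\alpha)\le u_\gamma$ for $\alpha\ge\alpha_\gamma$, we get $F(w)\le F(x_\beta)\le u_\gamma$ for all $(\beta,w)\ge(\alpha_\gamma,0)$, by monotonicity of $F$.

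By hypothesis, $\Phi(z_{(\alpha,z)})\xrightarrow{\mathcal G\text{-}o}0$. So there is a net $u_\gamma\downarrow0$ in $Y$ such that, for each $\gamma$, there is $(\alpha_0,z_0)\in\Lambda$ with $G(\Phi(w))\le u_\gamma$ whenever $(\beta,w)\ge(\alpha_0,z_0)$. Pick $\alpha_1$ with $\alpha_1\ge\alpha_0$ and $\alpha_1\ne\alpha_0$. Let $\beta\ge\alpha_1$; then $\beta\ge\alpha_0$ and $\beta\ne\alpha_0$ (if $\beta=\alpha_0$, antisymmetry would force $\alpha_1=\alpha_0$). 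Then every $w\in Z_+$ with $w\le x_\beta$ satisfies $(\beta,w)\ge(\alpha_0,z_0)$, hence $G(\Phi(w))\le u_\gamma$. Taking the supremum and using the first step gives $G(\widehat\Phi(x_\beta))\le u_\gamma$ for all $\beta\ge\alpha_1$. Therefore $G(\widehat\Phi(x_\alpha))\xrightarrow{o}0$, and since $G$ was arbitrary, $\widehat\Phi(x_\alpha)\xrightarrow{\mathcal G\text{-}o}0$. The kernel separation assumption is not essential here; I would only use it to ensure that limits are unique in later applications.
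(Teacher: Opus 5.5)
Your proof is correct, and its core is the paper's: the same lexicographically ordered index set $\Lambda=\{(\alpha,z):z\in Z_+,\ z\le x_\alpha\}$, the net $z_{(\alpha,z)}=z$, a single application of the hypothesis on $\Phi$, and the fact that $G$ preserves the supremum defining $\widehat\Phi$. The differences are in the bookkeeping.

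The paper treats separately the case where $\Gamma$ has a greatest element. It handles that case through uniqueness of $\mathcal G$-order limits, which is the only place where $\bigcap_{G\in\mathcal G}\ker G=\{0\}$ is used. You instead eliminate that case by passing to $\Gamma\times\mathbb N$.

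In the main case, the paper uses Dedekind completeness of $Y$ to switch to strong order convergence. This gives $r_\lambda\downarrow 0$ indexed by $\Lambda$ itself, and the paper must then check that $s_\alpha=r_{(\alpha,0)}\downarrow 0$. You read the uniform tail bound $G(\widehat\Phi(x_\beta))\le u_\gamma$ for $\beta\ge\alpha_1$ directly off the original order-convergence witness $(u_\gamma)$. That suffices because the dominating net in the definition of order convergence may be indexed by any directed set.

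Your route is shorter and, as you note, never uses kernel separation; Dedekind completeness is needed only for $\widehat\Phi$ to be defined. Even the paper's Case~1 does not need uniqueness, since a constant $\mathcal G$-null net already gives $G(\Phi(z))=0$ for every $G$. Consequently, the kernel separation hypothesis could also be dropped from Corollary~\ref{cor:modulus} and Proposition~\ref{prop:ideal_properties}, since those results use it only through this theorem.
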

\begin{proof}
Let $(x_\alpha)_{\alpha\in\Gamma}$ be a net in $X_+$ such that
$x_\alpha\xrightarrow{\mathcal F\text{-}o}0$. We consider two cases.

\medskip
\noindent
\textit{Case 1.} Suppose that $\Gamma$ has a maximal element
$\alpha_0$. Since $\Gamma$ is directed, $\alpha_0$ is the greatest
element of $\Gamma$. For every $F\in\mathcal F$,
$F(x_\alpha)\xrightarrow{o}0$, and hence $F(x_{\alpha_0})=0$.
Now let $z\in Z_+$ with $z\leq x_{\alpha_0}$. Since each
$F\in\mathcal F$ is increasing,
\[
0\leq F(z)\leq F(x_{\alpha_0})=0,
\]
so the constant net with value $z$ is $\mathcal F$-order null.
By the hypothesis on $\Phi$, the constant net with value $\Phi(z)$
is $\mathcal G$-order null. Since it also
$\mathcal G$-order converges to $\Phi(z)$ and
$\bigcap_{G\in\mathcal G}\ker G=\{0\}$, uniqueness of
$\mathcal G$-order limits yields $\Phi(z)=0$. Therefore
$\widehat{\Phi}(x_{\alpha_0})=0$. Since $\alpha_0$ is the greatest
element of $\Gamma$, it follows that
\[
\widehat{\Phi}(x_\alpha)
\xrightarrow{\mathcal G\text{-}o}0.
\]

\medskip
\noindent
\textit{Case 2.} Assume that $\Gamma$ has no maximal element. For each
$\alpha\in\Gamma$, set
\[
\Lambda_\alpha=\{(\alpha,u):u\in Z\cap[0,x_\alpha]\},
\qquad
\Lambda=\bigcup_{\alpha\in\Gamma}\Lambda_\alpha.
\]
We equip $\Lambda$ with the lexicographic order, that is,
$(\alpha,u)\leq(\alpha',u')$ if either $\alpha<\alpha'$, or
$\alpha=\alpha'$ and $u\leq u'$. Since $\Gamma$ is directed and has
no maximal element, $\Lambda$ is directed.

For $\lambda=(\alpha,u)\in\Lambda$, define
$z_\lambda=u$ and $\widetilde{x}_\lambda=x_\alpha$. Then
$0\leq z_\lambda\leq\widetilde{x}_\lambda$ for every
$\lambda\in\Lambda$. Moreover, for each $\alpha_0\in\Gamma$, if
$\lambda=(\alpha,u)\geq\lambda_0=(\alpha_0,0)$, then
$\alpha\geq\alpha_0$. Hence, by the definition of order convergence,
\[
x_\alpha\xrightarrow{\mathcal F\text{-}o}0
\quad\Longrightarrow\quad
\widetilde{x}_\lambda\xrightarrow{\mathcal F\text{-}o}0.
\]
Therefore, by the sandwich property of $\mathcal F$-order convergence
\cite[Proposition~5(iv)]{Tantrawan25},
$z_\lambda\xrightarrow{\mathcal F\text{-}o}0$.

By the hypothesis on $\Phi$,
$\Phi(z_\lambda)\xrightarrow{\mathcal G\text{-}o}0$. Fix
$G\in\mathcal G$. Then
$G(\Phi(z_\lambda))\xrightarrow{o}0$. Since $Y$ is Dedekind complete,
order convergence and strongly order convergence are equivalent.
Hence there exist $\lambda_1=(\alpha_1,u_1)\in\Lambda$ and a net
$(r_\lambda)$ in $Y_+$ such that $r_\lambda\downarrow0$ and
\[
G(\Phi(z_\lambda))\leq r_\lambda
\qquad(\lambda\geq\lambda_1).
\]
For each $\alpha\in\Gamma$, define
$s_\alpha=r_{(\alpha,0)}$. In fact, $s_\alpha\downarrow0$.
Indeed, $(s_\alpha)$ is decreasing. Moreover, since $\Gamma$ has no
maximal element, for every $\lambda=(\alpha,u)\in\Lambda$ there exists
$\alpha'>\alpha$, and hence
\[
s_{\alpha'}=r_{(\alpha',0)}\leq r_\lambda.
\]
Since $r_\lambda\downarrow0$, it follows that
$\inf_{\alpha\in\Gamma}s_\alpha=0$. Thus $s_\alpha\downarrow0$.

Choose $\alpha_2>\alpha_1$. If $\alpha\geq\alpha_2$ and
$u\in Z\cap[0,x_\alpha]$, then
$(\alpha,u)\geq(\alpha,0)\geq\lambda_1$, and therefore
\[
G(\Phi(u))
\leq r_{(\alpha,u)}
\leq r_{(\alpha,0)}
=s_\alpha.
\]
Since $G$ preserves arbitrary existing suprema,
\[
G(\widehat{\Phi}(x_\alpha))
=
\bigvee_{u\in Z\cap[0,x_\alpha]}G(\Phi(u))
\leq s_\alpha.
\]
Hence $G(\widehat{\Phi}(x_\alpha))\xrightarrow{o}0$. Since
$G\in\mathcal G$ was arbitrary,
\[
\widehat{\Phi}(x_\alpha)
\xrightarrow{\mathcal G\text{-}o}0.
\]
\end{proof}

\begin{corollary}\label{cor:modulus}
Let $X$ and $Y$ be Riesz spaces, with $Y$ Dedekind complete. Let
$\mathcal F\subseteq\operatorname{IS}(X)$ and
$\mathcal G\subseteq\operatorname{IS}_{\vee}(Y)$ satisfy
\[
\bigcap_{G\in\mathcal G}\ker(G)=\{0\}.
\]
If $T\in L_b(X,Y)$ is $\mathcal{FG}$-order continuous, then $|T|$ is
$\mathcal{FG}$-order continuous. Consequently,
$L_{\mathcal{FG}}(X,Y)$ is a Riesz subspace of $L_b(X,Y)$.
\end{corollary}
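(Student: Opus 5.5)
We will apply Theorem~\ref{thm:supremum-envelope} with $Z=X$ and $\Phi=|T|$ restricted to $X_+$ (or $\Phi(z)=|Tz|$). The Riesz–Kantorovich formula, valid because $Y$ is Dedekind complete and $T$ is order bounded, gives
\[
|T|(x)=\sup\{|Ty|:|y|\le x\}\qquad(x\in X_+).
\]
This is not literally of the form $\sup\{\Phi(z):0\le z\le x\}$, so we first massage it. For $|y|\le x$ write $y=y^+-y^-$ with $0\le y^\pm\le x$. Then $|Ty|\le |Ty^+|+|Ty^-|$. Setting $\Phi(z)=|Tz|$ for $z\in X_+$ we get
\[
\widehat\Phi(x)\le |T|(x)\le 2\,\widehat\Phi(x).
\]
Order boundedness of $\{\Phi(z):0\le z\le x\}$ follows from $T\in L_b(X,Y)$.

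Next we check the hypothesis on $\Phi$. If $z_\lambda\xrightarrow{\mathcal F\text{-}o}0$ in $X_+$, then $Tz_\lambda\xrightarrow{\mathcal G\text{-}o}0$ by $\mathcal{FG}$-order continuity. By definition this means $G(|Tz_\lambda|)\xrightarrow{o}0$ for each $G\in\mathcal G$, so $\Phi(z_\lambda)=|Tz_\lambda|$ is $\mathcal G$-order null. Theorem~\ref{thm:supremum-envelope} then yields $\widehat\Phi(x_\alpha)\xrightarrow{\mathcal G\text{-}o}0$ whenever $(x_\alpha)\subseteq X_+$ is $\mathcal F$-order null. Since each $G$ is increasing and subadditive,
\[
G(|T|x_\alpha)\le G(2\widehat\Phi(x_\alpha))\le 2G(\widehat\Phi(x_\alpha)),
\]
and so $|T|x_\alpha\xrightarrow{\mathcal G\text{-}o}0$.

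It remains to pass to general nets. If $x_\alpha\xrightarrow{\mathcal F\text{-}o}0$, then $|x_\alpha|\xrightarrow{\mathcal F\text{-}o}0$ trivially, since $F(||x_\alpha||)=F(|x_\alpha|)$. From $||T|x_\alpha|\le |T||x_\alpha|$ and the monotonicity of $G$ we conclude $|T|x_\alpha\xrightarrow{\mathcal G\text{-}o}0$. For the Riesz subspace claim, $L_{\mathcal{FG}}(X,Y)$ is already a vector subspace and is closed under taking moduli by the above, so it is a Riesz subspace of $L_b(X,Y)$.

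The main point needing care is the factor-$2$ comparison between $|T|x$ and the envelope $\widehat\Phi(x)$ over positive elements. It uses only subadditivity and monotonicity of $G$, so it needs no linearity of $\mathcal G$-convergence. If one prefers to avoid it, an alternative is to use $T^+(x)=\sup\{Tz:0\le z\le x\}$ with $\Phi(z)=(Tz)^+$ to get $T^+$, and $T^-$ similarly, then add. That route would, however, need additivity of $\mathcal G$-order convergence (Tantrawan's Proposition~5), which is also available.
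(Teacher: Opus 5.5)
Your proof is correct and follows the paper's route: both apply Theorem~\ref{thm:supremum-envelope} with $Z=X$ and $\Phi(u)=|Tu|$, and both get the order boundedness of the envelope set from $T\in L_b(X,Y)$. The only difference is minor. The paper dominates $T^+x_\alpha$ by the envelope, handles $T^-$ by applying the same argument to $-T$, and then adds. You instead dominate $|T|x_\alpha\le 2\widehat\Phi(x_\alpha)$ directly via the Riesz--Kantorovich formula and absorb the factor $2$ using subadditivity of $G$; you also make explicit the reduction from general nets to positive ones, which the paper leaves implicit.
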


\begin{proof}

It is enough to show that $T^+$ is $\mathcal{FG}$-order continuous.

Define $\Phi:X_+\to Y_+$ by $\Phi(u)=|Tu|$. If
$u_\lambda\xrightarrow{\mathcal F\text{-}o}0$, then
$Tu_\lambda\xrightarrow{\mathcal G\text{-}o}0$, and hence
$|Tu_\lambda|\xrightarrow{\mathcal G\text{-}o}0$.

Since $T$ is order bounded, for every $x\in X_+$ the set
$\{|Tu|:0\leq u\leq x\}$ is order bounded in $Y$. Thus
Theorem~\ref{thm:supremum-envelope} yields
\[
\bigvee_{0\leq u\leq x_\alpha}|Tu|
\xrightarrow{\mathcal G\text{-}o}0
\]
whenever $0\leq x_\alpha\xrightarrow{\mathcal F\text{-}o}0$.
Since
\[
0\leq T^+(x_\alpha)
\leq\bigvee_{0\leq u\leq x_\alpha}|Tu|,
\]
we obtain
$T^+(x_\alpha)\xrightarrow{\mathcal G\text{-}o}0$. Hence $T^+$ is
$\mathcal{FG}$-order continuous.

Applying the same argument to $-T$, we obtain that $T^-$ is
$\mathcal{FG}$-order continuous. Since $|T|=T^++T^-$, the result
follows. Consequently, $L_{\mathcal{FG}}(X,Y)$ is a Riesz subspace
of $L_b(X,Y)$.
\end{proof}

The following consequences are immediate from 
Example~\ref{ex:special_cases_FG} and
Corollary~\ref{cor:modulus}.

\begin{corollary}\label{cor:riesz_subspace_special_cases}
Let $X$ be a Riesz space and let $Y$ be a Dedekind complete Riesz
space. Then the following statements hold.
\begin{enumerate}
\item[(i)] $L_{uo}(X,Y)$ is a Riesz subspace of $L_b(X,Y)$.

\item[(ii)] $L_{so}(X,Y)$ is a Riesz subspace of $L_b(X,Y)$.

\item[(iii)] Suppose that $X$ and $Y$ are $f$-algebras and $Y$ is
semiprime. Then $L_{mo}(X,Y)$ is a Riesz subspace of $L_b(X,Y)$.
\end{enumerate}
\end{corollary}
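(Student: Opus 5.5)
The corollary is a specialization of Corollary~\ref{cor:modulus}. For each item we identify the families from Example~\ref{ex:special_cases_FG}. We then check three hypotheses on $\mathcal G$: that $Y$ is Dedekind complete (given), that $\mathcal G\subseteq\operatorname{IS}_{\vee}(Y)$, and that $\mathcal G$ has the kernel separation property. With these in hand, Corollary~\ref{cor:modulus} gives that $L_{\mathcal{FG}}(X,Y)$ is a Riesz subspace of $L_b(X,Y)$, and this space coincides with the relevant class.

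For (i) we take $\mathcal F=\mathcal T_X$ and $\mathcal G=\mathcal T_Y$, so $L_{\mathcal{FG}}(X,Y)=L_{uo}(X,Y)$. We have $\mathcal T_Y\subseteq\operatorname{IS}_{\vee}(Y)$ by the infinite distributive law: $(\bigvee_i y_i)\wedge u=\bigvee_i(y_i\wedge u)$. Kernel separation also holds, since $y\wedge u=0$ for all $u\in Y_+$ forces $y=y\wedge y=0$.

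For (ii) we take $\mathcal F=\mathcal T_X$ and $\mathcal G=\mathcal I_Y$. Here $\mathcal I_Y$ trivially preserves suprema and has trivial kernel. By Corollary~\ref{cor:so_order_bounded}, every strongly order continuous operator is already order bounded. Hence $L_{so}(X,Y)$, the class of all strongly order continuous operators, equals $L_{\mathcal{FG}}(X,Y)$, and Corollary~\ref{cor:modulus} applies.

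For (iii) we take $\mathcal F=\mathcal M_X$ and $\mathcal G=\mathcal M_Y$, so $L_{\mathcal{FG}}(X,Y)=L_{mo}(X,Y)$. We have $\mathcal M_Y\subseteq\operatorname{IS}_{\vee}(Y)$ by \cite[Proposition~2.9]{AydinEmelyanovGorokhova25}, as recalled in the preliminaries. Kernel separation is where semiprimeness enters. If $yu=0$ for all $u\in Y_+$, then taking $u=y$ gives $y^2=0$, and semiprimeness gives $y=0$; this is the fact stated earlier for $\mathcal M_X$. This is the only step needing care, since it is the one place where an extra hypothesis is used. With it, the result follows from Corollary~\ref{cor:modulus}.
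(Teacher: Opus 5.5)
Your proposal is correct and takes the same route as the paper, which presents this corollary as an immediate consequence of Example~\ref{ex:special_cases_FG} and Corollary~\ref{cor:modulus}. Your explicit checks are the ones the paper leaves implicit: that $\mathcal T_Y$, $\mathcal I_Y$ and, for semiprime $Y$, $\mathcal M_Y$ lie in $\operatorname{IS}_{\vee}(Y)$ and have the kernel separation property.
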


Part~(i) recovers the corresponding result for $uo$-continuous
operators obtained by Turan and G\"urk\"ok
\cite{TuranGurkok24}. Part~(ii) gives an affirmative answer to
Problem~1 of Bahramnezhad and Haghnejad Azar
\cite{Bahramnezhad_Azar}. G\"urk\"ok and Turan
\cite[Corollaries~3.7 and~3.8]{GurkokTuran25} obtained related
modulus and Riesz space results for $mo$-continuous operators under
different assumptions.

\begin{remark}\label{rem:Fsigma_Riesz_space}
For the sequential case, some special cases are already known. In
particular, Bahramnezhad and Haghnejad Azar
\cite[Theorem~1 and Remark~1]{Bahramnezhad_Azar} showed that the
modulus of an order bounded strongly $\sigma$-order continuous
functional is again strongly $\sigma$-order continuous. The same
argument shows that $X^\sim_{\mathcal F\sigma}$ is a Riesz subspace
of $X^\sim$.

Indeed, let $f\in X^\sim_{\mathcal F\sigma}$ and
$0\leq x_n\xrightarrow{\mathcal F\text{-}o}0$. Choose
$u_n\in[0,x_n]$ such that
\[
f^+(x_n)<f(u_n)+\frac1n.
\]
Then $u_n\xrightarrow{\mathcal F\text{-}o}0$, and hence
\[
0\leq f^+(x_n)<f(u_n)+\frac1n\longrightarrow0.
\]
Thus $f^+\in X^\sim_{\mathcal F\sigma}$, and consequently
$X^\sim_{\mathcal F\sigma}$ is a Riesz subspace of $X^\sim$.

The general case, however, remains open and leads to the following
problem.
\end{remark}

\begin{problem}\label{prob:modulus_FGsigma}
Let $X$ and $Y$ be Riesz spaces, with $Y$ Dedekind complete, and let
\[
\mathcal F\subseteq\operatorname{IS}(X)
\quad\text{and}\quad
\mathcal G\subseteq\operatorname{IS}(Y).
\]
If $T\in L_b(X,Y)$ is $\mathcal{FG}\sigma$-order continuous, is
$|T|$ necessarily $\mathcal{FG}\sigma$-order continuous?
\end{problem}

\section{Order Continuity}

In this section, we study the relationship between
$\mathcal{FG}$-order continuity and classical order continuity.
We first establish a useful property of $\mathcal F$-order
convergence that will be needed in our study of
$\mathcal{FG}$-order continuous operators.

\begin{lemma}\label{lem:order-bounded-F-null}
Let $X$ be an Archimedean Riesz space and let
\[
\mathcal F\subseteq\operatorname{IS}_{\vee}(X)
\]
satisfy
\[
\bigcap_{F\in\mathcal F}\ker F=\{0\}.
\]
Then every order bounded $\mathcal F$-order null net in $X$
is order null.
\end{lemma}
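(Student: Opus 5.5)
The plan is to pass to the Dedekind completion and use kernel separation to show that the "limit superior" of the net vanishes. First, reduce to positive nets. If $(x_\alpha)$ is order bounded and $\mathcal F$-order null, then so is $(|x_\alpha|)$, since the definition of $\mathcal F$-order convergence only involves $|x_\alpha|$. Moreover, $x_\alpha\xrightarrow{o}0$ if and only if $|x_\alpha|\xrightarrow{o}0$. So we may assume $0\le x_\alpha\le b$ for some $b\in X_+$ and all $\alpha$.

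Let $X^\delta$ be the Dedekind completion of $X$. It is here that we use that $X$ is Archimedean: then $X$ is an order dense, regular Riesz subspace of $X^\delta$. We will use the standard fact that an order bounded net in $X$ is order null in $X$ if and only if it is order null in $X^\delta$. In $X^\delta$, put
\[
y_\alpha=\sup_{\beta\ge\alpha}x_\beta\le b .
\]
Then $(y_\alpha)$ is decreasing; let $y=\inf_\alpha y_\alpha\ge0$. Since $0\le x_\alpha\le y_\alpha$, it suffices to show that $y=0$; this gives $x_\alpha\xrightarrow{o}0$ in $X^\delta$, hence in $X$.

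Suppose $y>0$. By order density there is $w\in X$ with $0<w\le y$. For each $\alpha$ we have $w\le y_\alpha$, so the infinite distributive law in $X^\delta$ gives
\[
w=w\wedge y_\alpha=\sup_{\beta\ge\alpha}(w\wedge x_\beta)
\]
in $X^\delta$. The elements $w\wedge x_\beta$ and the supremum $w$ all lie in $X$, so this is also a supremum in $X$. Now fix $F\in\mathcal F$. Since $F$ preserves arbitrary existing suprema and is increasing,
\[
F(w)=\sup_{\beta\ge\alpha}F(w\wedge x_\beta)\le\sup_{\beta\ge\alpha}F(x_\beta)
\]
for every $\alpha$, where the last expression is read in $X^\delta$ (equivalently, as a bound by every common upper bound in $X$). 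By hypothesis $F(x_\beta)\xrightarrow{o}0$, so there is a net $z_\gamma\downarrow0$ such that for each $\gamma$ some $\alpha$ satisfies $F(x_\beta)\le z_\gamma$ for all $\beta\ge\alpha$. For that $\alpha$ the display yields $F(w)\le z_\gamma$. As $\gamma$ is arbitrary, $F(w)=0$.

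Hence $w\in\bigcap_{F\in\mathcal F}\ker F=\{0\}$, contradicting $w>0$. Therefore $y=0$, which proves the lemma.

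The main obstacle is the passage between $X$ and $X^\delta$. We need two things. First, the identity $w=\sup_{\beta\ge\alpha}(w\wedge x_\beta)$ must hold as a supremum in $X$, so that the supremum-preserving property of $F$, which is a property of $F$ on $X_+$, can be applied. Second, we need the transfer of order nullness back from $X^\delta$ to $X$. Both rest on the regularity and order density of $X$ in $X^\delta$, which in turn rest on $X$ being Archimedean.
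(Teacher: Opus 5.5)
Your proof is correct, but it takes a different route from the paper's in the general (non-Dedekind complete) case.

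\textbf{The paper's route.} It first proves the lemma for Dedekind complete $X$, using $u_\beta=\sup_{\alpha\ge\beta}|x_\alpha|$ and $F(u_\beta)=\sup_{\alpha\ge\beta}F(|x_\alpha|)$. For general $X$ it then:
\begin{enumerate}
\item[(a)] extends every $F$ to $F^\delta(u)=\sup\{F(x):x\in X_+,\ x\le u\}$ on $X^\delta$;
\item[(b)] checks that $\mathcal F^\delta$ inherits kernel separation and that each $F^\delta$ still preserves arbitrary suprema;
\item[(c)] invokes Tantrawan's Corollary~17 to turn $\mathcal F$-order nullness into $\mathcal F^\delta$-order nullness in $X^\delta$;
\item[(d)] applies the Dedekind complete case and pulls order nullness back to $X$.
\end{enumerate}

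\textbf{Your route.} You never extend $F$. You use the limsup $y=\inf_\alpha\sup_{\beta\ge\alpha}x_\beta$ in $X^\delta$ only as an auxiliary object. Order density gives $0<w\le y$ with $w\in X$. The distributive law writes $w=\sup_{\beta\ge\alpha}(w\wedge x_\beta)$ as a supremum that already lives in $X$. So the hypothesis $F\in\operatorname{IS}_{\vee}(X)$ applies directly, and comparing with the dominating net $(z_\gamma)\subseteq X$ forces $F(w)=0$. This distributive-law identity is the same device the paper uses to prove $F^\delta\in\operatorname{IS}_{\vee}(X^\delta)$; you just use it once, exactly where it is needed.

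\textbf{What each buys.} Your argument is shorter and self-contained. It needs no subadditivity or supremum preservation for $F^\delta$, no kernel separation for $\mathcal F^\delta$, and no appeal to Tantrawan's transfer result. The only external fact left is the transfer of order nullness of an order bounded net from $X^\delta$ back to $X$, which both proofs share. The paper's argument is more modular: it isolates a clean Dedekind complete case and records that $F\mapsto F^\delta$ preserves membership in $\operatorname{IS}_{\vee}$. That fact could be reused in other arguments passing between $X$ and $X^\delta$.
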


\begin{proof}
First suppose that $X$ is Dedekind complete. Let $(x_\alpha)$ be
order bounded and
$x_\alpha\xrightarrow{\mathcal F\text{-}o}0$. For each $\beta$, set
\[
u_\beta=\sup_{\alpha\geq\beta}|x_\alpha|.
\]
Then $(u_\beta)$ is decreasing. Since
$F(|x_\alpha|)\xrightarrow{o}0$ for every $F\in\mathcal F$ and
$F$ preserves arbitrary existing suprema, we have
\[
F(u_\beta)
=
\sup_{\alpha\geq\beta}F(|x_\alpha|)
\downarrow0.
\]
Let $u=\inf_\beta u_\beta$. Then
$0\leq F(u)\leq F(u_\beta)$ for every $\beta$, and hence $F(u)=0$
for every $F\in\mathcal F$. By the kernel separation property,
$u=0$. Thus $u_\beta\downarrow0$, and therefore
$x_\alpha\xrightarrow{o}0$.

Now let $X$ be arbitrary, and let $X^\delta$ be its Dedekind
completion. For each $F\in\mathcal F$, let
\[
F^\delta(u)
=
\sup\{F(x):x\in X_+,\ x\leq u\},
\qquad u\in(X^\delta)_+,
\]
be the extension introduced by Tantrawan \cite{Tantrawan25}.
The kernel separation property of $\mathcal F$ clearly passes to
the family
\[
\mathcal F^\delta=\{F^\delta:F\in\mathcal F\}.
\]

We claim that
$F^\delta\in\operatorname{IS}_{\vee}(X^\delta)$ for every
$F\in\mathcal F$. Let $A\subseteq(X^\delta)_+$ and suppose that
$u=\sup A$. Set
\[
B=\{y\in X_+:\text{ there exists }a\in A\text{ with }y\leq a\}.
\]
Then
\[
\sup B
=
\sup_{a\in A}\sup\{y\in X_+:y\leq a\}
=
\sup A
=
u.
\]

Let $0\leq x\in X$ with $x\leq u$. As $x\leq u$ and $u=\sup B$, the infinite distributive law gives
\[
x
=x\wedge u
=x\wedge\sup_{y\in B}y
=\sup_{y\in B}(x\wedge y).
\]

Since $X$ is regular in $X^\delta$,
this supremum is also taken in $X$. Hence, using
$F\in\operatorname{IS}_{\vee}(X)$,
\[
F(x)
=
\sup_{y\in B}F(x\wedge y)
\leq
\sup_{a\in A}F^\delta(a).
\]
Taking the supremum over all $x\in X_+$ with $x\leq u$, we obtain
\[
F^\delta(u)\leq\sup_{a\in A}F^\delta(a).
\]
The reverse inequality follows from monotonicity. Thus
$F^\delta\in\operatorname{IS}_{\vee}(X^\delta)$.
By \cite[Corollary~17]{Tantrawan25},
\[
x_\alpha\xrightarrow{\mathcal F\text{-}o}0
\quad\Longrightarrow\quad
x_\alpha\xrightarrow{\mathcal F^\delta\text{-}o}0
\quad\text{in }X^\delta.
\]
Since $(x_\alpha)$ is order bounded in $X$, it is also order bounded
in $X^\delta$. Therefore, applying the first part of the proof to
$X^\delta$ and $\mathcal F^\delta$, we obtain
\[
x_\alpha\xrightarrow{o}0
\quad\text{in }X^\delta.
\]
By \cite[Corollary~2.9]{GaoTroitskyXanthos17}, this is equivalent to
\[
x_\alpha\xrightarrow{o}0
\quad\text{in }X.
\]
This completes the proof.
\end{proof}

\begin{theorem}\label{thm:FG-continuous-implies-order-continuous}
Let $T:X\to Y$ be an order bounded operator between
Riesz spaces. Let
\[
\mathcal F\subseteq\operatorname{IS}_o(X)
\quad\text{and}\quad
\mathcal G\subseteq\operatorname{IS}_{\vee}(Y)
\]
satisfy
\[
\bigcap_{G\in\mathcal G}\ker G=\{0\}.
\]
If $T$ is $\mathcal{FG}$-order continuous, then $T$ is order
continuous.
\end{theorem}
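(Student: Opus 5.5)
Let $(x_\alpha)$ be a net in $X$ with $x_\alpha\xrightarrow{o}0$; we must show $Tx_\alpha\xrightarrow{o}0$ in $Y$. The plan is to pass through $\mathcal F$-order convergence on the domain and then use Lemma~\ref{lem:order-bounded-F-null} on the range.

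First, we show that order convergence implies $\mathcal F$-order convergence when $\mathcal F\subseteq\operatorname{IS}_o(X)$. Choose $z_\beta\downarrow0$ such that for every $\beta$ we have $|x_\alpha|\le z_\beta$ eventually. Since each $F\in\mathcal F$ is increasing and order continuous with $F(0)=0$, we get $F(z_\beta)\downarrow0$. Because $F(|x_\alpha|)\le F(z_\beta)$ eventually, it follows that $F(|x_\alpha|)\xrightarrow{o}0$. Hence $x_\alpha\xrightarrow{\mathcal F\text{-}o}0$, and the $\mathcal{FG}$-order continuity of $T$ gives $Tx_\alpha\xrightarrow{\mathcal G\text{-}o}0$.

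Second, we want to apply Lemma~\ref{lem:order-bounded-F-null} in $Y$ with the family $\mathcal G$. This family lies in $\operatorname{IS}_\vee(Y)$ and has the kernel separation property, so every order bounded $\mathcal G$-order null net is order null. The obstacle is that $(Tx_\alpha)$ need not be order bounded, since an order null net is only eventually order bounded. We handle this by a tail argument. Fix some $\beta_0$ and an index $\alpha_0$ with $|x_\alpha|\le z_{\beta_0}$ for all $\alpha\ge\alpha_0$. Since $T$ is order bounded, there is $w\in Y_+$ with $|Tx_\alpha|\le w$ for all $\alpha\ge\alpha_0$.

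Now restrict to the tail net $(x_\alpha)_{\alpha\ge\alpha_0}$, whose index set is directed. Its image $(Tx_\alpha)_{\alpha\ge\alpha_0}$ is still $\mathcal G$-order null, because $\mathcal G$-order convergence is defined through the order convergence of $G(|Tx_\alpha|)$, and order convergence depends only on tails. This tail net is order bounded by $w$, so Lemma~\ref{lem:order-bounded-F-null} shows that it is order null. Order convergence of a net is likewise determined by its tails, so $Tx_\alpha\xrightarrow{o}0$. Therefore $T$ is order continuous.
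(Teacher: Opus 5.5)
Your proposal is correct and follows the paper's proof: you pass from order convergence to $\mathcal F$-order convergence using $\mathcal F\subseteq\operatorname{IS}_o(X)$, apply $\mathcal{FG}$-order continuity, and then apply Lemma~\ref{lem:order-bounded-F-null} in $Y$ to an order bounded tail of $(Tx_\alpha)$. The difference is only that you write out details the paper leaves implicit: why each $F(|x_\alpha|)\xrightarrow{o}0$, and why passing to a tail preserves both $\mathcal G$-order nullness and order nullness.
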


\begin{proof}
Let $x_\alpha\xrightarrow{o}0$ in $X$. Since
$\mathcal F\subseteq\operatorname{IS}_o(X)$, we have
$x_\alpha\xrightarrow{\mathcal F\text{-}o}0$. Since $T$ is
$\mathcal{FG}$-order continuous,
\[
Tx_\alpha\xrightarrow{\mathcal G\text{-}o}0.
\]

The net $(x_\alpha)$ is eventually order bounded, and hence so is
$(Tx_\alpha)$, since $T$ is order bounded. Applying
Lemma~\ref{lem:order-bounded-F-null} to an order bounded tail of
$(Tx_\alpha)$, we obtain
\[
Tx_\alpha\xrightarrow{o}0.
\]
Thus $T$ is order continuous.
\end{proof}

Combining Example~\ref{ex:special_cases_FG} with
Theorem~\ref{thm:FG-continuous-implies-order-continuous}, we obtain
the following consequences.

\begin{corollary}\label{cor:consequences-order-continuity}
The following statements hold.
\begin{enumerate}

\item[(i)] Every order bounded $uo$-continuous operator
$T:X\to Y$ between Riesz spaces is order continuous.

\item[(ii)] Every strongly order continuous operator
$T:X\to Y$ between Riesz spaces is order continuous. In particular,
\[
L_{so}(X,Y)\subseteq L_n(X,Y).
\]

\item[(iii)] Let $X$ and $Y$ be normed Riesz spaces. If the norm of
$X$ is order continuous and the norm of $Y$ preserves arbitrary
suprema, then every order bounded norm continuous operator
$T:X\to Y$ is order continuous.

\item[(iv)] Let $X$ and $Y$ be $f$-algebras, with $Y$ semiprime.
Then every order bounded $mo$-continuous operator
$T:X\to Y$ is order continuous.

\item[(v)] Let $X$ be a normed Riesz space with order continuous norm
and let $Y$ be a Riesz space. Then every order bounded
$buo$-continuous operator $T:X\to Y$ is order continuous.

\item[(vi)] Let $X$ and $Y$ be Riesz spaces whose order duals
separate points. Then every order bounded $bbuo$-continuous operator
$T:X\to Y$ is order continuous.

\end{enumerate}
\end{corollary}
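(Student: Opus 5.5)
The plan is to derive each item from Theorem~\ref{thm:FG-continuous-implies-order-continuous}, using the correspondences of Example~\ref{ex:special_cases_FG}. For every item we must check three things: that $\mathcal F\subseteq\operatorname{IS}_o(X)$; that $\mathcal G\subseteq\operatorname{IS}_{\vee}(Y)$ has the kernel separation property; and that the operator is order bounded. Order boundedness is assumed in every item except (ii), where Corollary~\ref{cor:so_order_bounded} supplies it. In the bounded variants (v) and (vi), Example~\ref{ex:special_cases_FG} gives only an implication into $\mathcal{FG}$-order continuity. That is all we need, because the theorem only uses $\mathcal{FG}$-order continuity.

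For (i) and (ii) we take $\mathcal F=\mathcal T_X$. Each $T_u$ is order continuous: if $x_\alpha\xrightarrow{o}0$ then $x_\alpha\wedge u\le x_\alpha$, so $x_\alpha\wedge u\xrightarrow{o}0$. We take $\mathcal G=\mathcal T_Y$ or $\mathcal I_Y$. Both lie in $\operatorname{IS}_{\vee}(Y)$ (for $\mathcal T_Y$ by the infinite distributive law) and both separate points, as noted in the introduction. The inclusion $L_{so}(X,Y)\subseteq L_n(X,Y)$ then follows from Corollary~\ref{cor:so_order_bounded}.

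The remaining items are handled as follows.
\begin{itemize}
\item[(iii)] Here $\mathcal F=\mathcal N_X\subseteq\operatorname{IS}_o(X)$ because the norm of $X$ is order continuous: if $x_\alpha\xrightarrow{o}0$ then $\|x_\alpha\|\to0$, hence $\|x_\alpha\|u\xrightarrow{o}0$. We take $\mathcal G=\mathcal N_Y$. It separates points, and it lies in $\operatorname{IS}_{\vee}(Y)$ precisely because the norm of $Y$ preserves arbitrary suprema. The point to check is that $\sup_a\|a\|\,u=(\sup_a\|a\|)u$ holds in $Y$, which is true since $u>0$ and $Y$ is Archimedean.
\item[(iv)] Here $\mathcal F=\mathcal M_X$. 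Each $M_u$ is order continuous, since multiplication by a positive element in an Archimedean $f$-algebra is an order continuous positive operator (an orthomorphism). We take $\mathcal G=\mathcal M_Y$, which lies in $\operatorname{IS}_{\vee}(Y)$ by \cite[Proposition~2.9]{AydinEmelyanovGorokhova25} and separates points because $Y$ is semiprime.
\item[(v)] Here $\mathcal F=\mathcal T_X\cup\mathcal N_X\subseteq\operatorname{IS}_o(X)$, combining the arguments for (i) and (iii), and $\mathcal G=\mathcal T_Y$.
\item[(vi)] Here $\mathcal F=\mathcal I_X$, which is trivially order continuous, and $\mathcal G=\mathcal T_Y$.
\end{itemize}

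The main obstacle is (iii): we must verify that $N_u$ genuinely preserves arbitrary existing suprema, interpreting the hypothesis as $\|\sup A\|=\sup_{a\in A}\|a\|$. Everything else reduces to routine checks of the hypotheses of Theorem~\ref{thm:FG-continuous-implies-order-continuous}.
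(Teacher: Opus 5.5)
Your proposal is correct and follows the paper's own route: each item comes from inserting the families of Example~\ref{ex:special_cases_FG} into Theorem~\ref{thm:FG-continuous-implies-order-continuous}, with Corollary~\ref{cor:so_order_bounded} supplying order boundedness in (ii). Your verifications of the hypotheses fill in details the paper leaves implicit. Checking order continuity of each $F$ only at $0$ is enough, since $|F(x)-F(y)|\le F(|x-y|)$ for every $F\in\operatorname{IS}(X)$. Item (ii) even holds without the theorem, because $x_\alpha\xrightarrow{o}0$ implies $x_\alpha\xrightarrow{uo}0$ and hence $Tx_\alpha\xrightarrow{o}0$.
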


Part~\textup{(i)} extends
\cite[Proposition~3.2]{TuranAltinGurkok22} by removing the
Dedekind completeness assumption on $Y$.
When $Y$ is Dedekind complete, Corollary~\ref{cor:modulus} also
gives the corresponding modulus result for order bounded
$uo$-continuous operators.

Part~\textup{(iv)} recovers
\cite[Proposition~3.3]{GurkokTuran25}.
To the best of our knowledge, the conclusions in
parts~\textup{(ii)}, \textup{(v)}, and~\textup{(vi)} have not
previously been established for strongly order continuous,
$buo$-continuous, and $bbuo$-continuous operators, respectively.

\begin{remark}
The converse of
Theorem~\ref{thm:FG-continuous-implies-order-continuous}
does not hold in general, even for lattice homomorphisms.
Indeed, let $T$ be the identity operator on $c_0$. Then
\[
e_n\xrightarrow{uo}0,
\qquad\text{but}\qquad
e_n\not\xrightarrow{o}0.
\]
Hence, $T$ is order continuous but not strongly order continuous.
Therefore, by Example~\ref{ex:special_cases_FG}, $T$ is not
$\mathcal{FG}$-order continuous for
\[
\mathcal F=\mathcal T_{c_0}
\qquad\text{and}\qquad
\mathcal G=\mathcal I_{c_0}.
\]
\end{remark}

\begin{theorem}
\label{thm:FGsigma-continuous-implies-sigma-order-continuous}
Let $T:X\to Y$ be an order bounded operator between Riesz spaces.
Let
\[
\mathcal F\subseteq\operatorname{IS}_{\sigma o}(X)
\quad\text{and}\quad
\mathcal G\subseteq\operatorname{IS}_{\vee}(Y)
\]
satisfy
\[
\bigcap_{G\in\mathcal G}\ker G=\{0\}.
\]
If $T$ is $\mathcal{FG}\sigma$-order continuous, then $T$ is
$\sigma$-order continuous.
\end{theorem}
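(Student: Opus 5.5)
The plan is to follow the proof of Theorem~\ref{thm:FG-continuous-implies-order-continuous} verbatim, replacing nets by sequences throughout. Fix a sequence $(x_n)$ in $X$ with $x_n\xrightarrow{o}0$.

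\emph{Step 1.} I will show that $x_n\xrightarrow{\mathcal F\text{-}o}0$. Choose the dominating sequence carefully: for an order null sequence there is a net $z_\beta\downarrow0$ with $|x_n|\le z_\beta$ eventually. The $\sigma$-order continuity of $F$ only handles sequences, so instead I set $w_n=\sup_{k\ge n}|x_k|$, computed in the Dedekind completion if necessary. To avoid that, I intend to use the standard fact that a sequence $x_n\xrightarrow{o}0$ in an Archimedean space forces $F(|x_n|)\xrightarrow{o}0$ whenever $F$ is $\sigma$-order continuous. This is exactly the meaning of $\operatorname{IS}_{\sigma o}(X)$ as used in \cite{Tantrawan25} (compare \cite[Corollary~6]{Tantrawan25}): the maps send order null sequences to order null sequences. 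I will take this as the definition and invoke it directly, noting that monotonicity and $F(0)=0$ let us work with $|x_n|$. Hence $F(|x_n|)\xrightarrow{o}0$ for every $F\in\mathcal F$, that is, $x_n\xrightarrow{\mathcal F\text{-}o}0$.

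\emph{Step 2.} By $\mathcal{FG}\sigma$-order continuity, $Tx_n\xrightarrow{\mathcal G\text{-}o}0$.

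\emph{Step 3.} An order convergent sequence is eventually order bounded, so there are $n_0$ and $b\in X_+$ with $|x_n|\le b$ for $n\ge n_0$. Since $T$ is order bounded, $T[-b,b]$ is contained in some $[-c,c]$, so the tail $(Tx_n)_{n\ge n_0}$ is order bounded in $Y$. This tail is still $\mathcal G$-order null. Lemma~\ref{lem:order-bounded-F-null} applies to it, because $\mathcal G\subseteq\operatorname{IS}_{\vee}(Y)$ has the kernel separation property and a sequence is a net. Hence $(Tx_n)_{n\ge n_0}\xrightarrow{o}0$, and therefore $Tx_n\xrightarrow{o}0$. So $T$ is $\sigma$-order continuous.

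The main point of care is Step~1. We must make sure that the notion of $\sigma$-order continuity for the nonlinear maps $F$ yields exactly the passage from order null sequences to $\mathcal F$-order null sequences, without needing nets. Everything else is an immediate transcription of the net argument, since Lemma~\ref{lem:order-bounded-F-null} is already valid for arbitrary nets and hence for sequences.
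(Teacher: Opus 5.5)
Your proposal is correct and is essentially the paper's own argument: the paper proves this theorem by rerunning the proof of Theorem~\ref{thm:FG-continuous-implies-order-continuous} for sequences. That is, $\sigma$-order continuity of each $F\in\mathcal F$ gives $x_n\xrightarrow{\mathcal F\text{-}o}0$, then $\mathcal{FG}\sigma$-order continuity gives $Tx_n\xrightarrow{\mathcal G\text{-}o}0$, and Lemma~\ref{lem:order-bounded-F-null} applied to an order bounded tail of $(Tx_n)$ yields $Tx_n\xrightarrow{o}0$; your Step~1 rests on reading $\operatorname{IS}_{\sigma o}(X)$ as ``order null sequences are mapped to order null sequences,'' which is exactly the interpretation the paper uses implicitly.
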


\begin{proof}
The proof follows from the same argument as in
Theorem~\ref{thm:FG-continuous-implies-order-continuous}, applied
to sequences.
\end{proof}

\section{Band Properties and Finite-Dimensionality}

Having shown that $L_{\mathcal{FG}}(X,Y)$ is a Riesz subspace of
$L_b(X,Y)$, we now study its ideal and band properties. We first
show that, under suitable assumptions, it is an ideal but need not
be a band, and then relate the band property to the
finite-dimensionality of $X$.

\begin{proposition}\label{prop:ideal_properties}
Let $X$ and $Y$ be Riesz spaces, with $Y$ Dedekind complete, and let
$\mathcal F\subseteq\operatorname{IS}(X)$ and
$\mathcal G\subseteq\operatorname{IS}_{\vee}(Y)$ satisfy
$\bigcap_{G\in\mathcal G}\ker G=\{0\}$.
Let $T,S\in L_b(X,Y)$ with $|S|\leq|T|$.
If $T$ is $\mathcal{FG}$-order continuous, then $S$ is also
$\mathcal{FG}$-order continuous.
\end{proposition}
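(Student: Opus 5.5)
By Corollary~\ref{cor:modulus}, $|T|$ is $\mathcal{FG}$-order continuous, so we may replace $T$ by $|T|$ and assume $T\geq0$ with $|S|\leq T$. We first reduce to positive nets. If $x_\alpha\xrightarrow{\mathcal F\text{-}o}0$, then $|x_\alpha|\xrightarrow{\mathcal F\text{-}o}0$, since $\mathcal F$-order convergence is defined through $F(|x_\alpha|)$. It therefore suffices to show that $0\leq x_\alpha\xrightarrow{\mathcal F\text{-}o}0$ implies $Sx_\alpha\xrightarrow{\mathcal G\text{-}o}0$. Once this is done, we get the general case from
\[
|Sx_\alpha|=|Sx_\alpha^+-Sx_\alpha^-|\leq |Sx_\alpha^+|+|Sx_\alpha^-|,
\]
using the additivity and solidity of $\mathcal G$-order convergence \cite[Proposition~5]{Tantrawan25}. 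This works because $x_\alpha^\pm\leq|x_\alpha|$, so by the sandwich property $x_\alpha^\pm\xrightarrow{\mathcal F\text{-}o}0$.

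For $x_\alpha\geq0$, the standard estimate $|Sx|\leq|S|x\leq Tx$ for $x\in X_+$ gives
\[
0\leq|Sx_\alpha|\leq Tx_\alpha.
\]
Since $T$ is $\mathcal{FG}$-order continuous, $Tx_\alpha\xrightarrow{\mathcal G\text{-}o}0$. Each $G\in\mathcal G$ is increasing, so
\[
0\leq G(|Sx_\alpha|)\leq G(Tx_\alpha)\xrightarrow{o}0,
\]
and hence $G(|Sx_\alpha|)\xrightarrow{o}0$ for every $G$. Thus $Sx_\alpha\xrightarrow{\mathcal G\text{-}o}0$.

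The only step that is not routine is passing from $T$ to $|T|$. This is exactly where Dedekind completeness of $Y$, the assumption $\mathcal G\subseteq\operatorname{IS}_\vee(Y)$, and the kernel separation property are needed, and all of it is contained in Corollary~\ref{cor:modulus}. Everything after that is solidity of $\mathcal G$-order convergence together with the inequality $|Sx|\leq|S||x|$, which holds since $Y$ is Dedekind complete and so $|S|$ exists.
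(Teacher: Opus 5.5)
Your proof is correct and follows the paper's argument: use Corollary~\ref{cor:modulus} to get that $|T|$ is $\mathcal{FG}$-order continuous, then combine $|Sx_\alpha|\leq|S|x_\alpha\leq|T|x_\alpha$ with the monotonicity of each $G\in\mathcal G$. Your explicit reduction to positive nets via $x_\alpha^\pm$ is valid; the paper simply assumes positive nets without loss of generality, and the reduction could also be shortened by applying $|Sx_\alpha|\leq|T|\,|x_\alpha|$ directly.
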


\begin{proof}
Let $0\leq x_\alpha\xrightarrow{\mathcal F\text{-}o}0$.
By Corollary~\ref{cor:modulus},
$|T|x_\alpha\xrightarrow{\mathcal G\text{-}o}0$.
For every $\alpha$,
\[
|Sx_\alpha|\leq |S|x_\alpha\leq |T|x_\alpha.
\]
Hence, for every $G\in\mathcal G$,
\[
0\leq G(|Sx_\alpha|)
\leq G(|T|x_\alpha)\xrightarrow{o}0.
\]
Thus $Sx_\alpha\xrightarrow{\mathcal G\text{-}o}0$, and therefore
$S$ is $\mathcal{FG}$-order continuous.
\end{proof}

The following consequences are immediate from 
Example~\ref{ex:special_cases_FG} and
Proposition~\ref{prop:ideal_properties}.

\begin{corollary}\label{cor:ideal_special_cases}
Let $X$ and $Y$ be Riesz spaces, with $Y$ Dedekind complete. Then
the following statements hold.
\begin{enumerate}
\item[(i)] $L_{uo}(X,Y)$ is an ideal of $L_b(X,Y)$.

\item[(ii)] $L_{so}(X,Y)$ is an ideal of $L_b(X,Y)$.

\item[(iii)] Suppose that $X$ and $Y$ are $f$-algebras and $Y$ is
semiprime. Then $L_{mo}(X,Y)$ is an ideal of $L_b(X,Y)$.
\end{enumerate}
\end{corollary}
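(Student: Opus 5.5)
Each of the three parts follows the same pattern. First identify the relevant families through Example~\ref{ex:special_cases_FG}. Then check that the range family lies in $\operatorname{IS}_{\vee}(Y)$ and has the kernel separation property. Finally, combine Corollary~\ref{cor:modulus} with Proposition~\ref{prop:ideal_properties}.

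The argument is the same for all three cases. Corollary~\ref{cor:modulus} shows that $L_{\mathcal{FG}}(X,Y)$ is a Riesz subspace, and in particular a linear subspace, of $L_b(X,Y)$. Proposition~\ref{prop:ideal_properties} shows that it is solid: if $S\in L_b(X,Y)$, $T\in L_{\mathcal{FG}}(X,Y)$ and $|S|\le|T|$, then $S\in L_{\mathcal{FG}}(X,Y)$. A solid linear subspace is an ideal. Both results need only $Y$ Dedekind complete, $\mathcal F\subseteq\operatorname{IS}(X)$ (which is automatic), $\mathcal G\subseteq\operatorname{IS}_\vee(Y)$, and $\bigcap_{G\in\mathcal G}\ker G=\{0\}$.

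\emph{Part (i).} Take $\mathcal F=\mathcal T_X$ and $\mathcal G=\mathcal T_Y$. By the infinite distributive law, $\mathcal T_Y\subseteq\operatorname{IS}_\vee(Y)$. Kernel separation holds because if $x\wedge u=0$ for every $u\in Y_+$, then taking $u=x$ gives $x=0$. By Example~\ref{ex:special_cases_FG}(ii), $L_{\mathcal{FG}}(X,Y)=L_{uo}(X,Y)$.

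\emph{Part (ii).} Take $\mathcal F=\mathcal T_X$ and $\mathcal G=\mathcal I_Y$. The identity trivially preserves suprema and has zero kernel. By Corollary~\ref{cor:so_order_bounded}, every strongly order continuous operator is already order bounded. So $L_{so}(X,Y)$ is exactly the class of order bounded $\mathcal{FG}$-order continuous operators, that is, $L_{\mathcal{FG}}(X,Y)$.

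\emph{Part (iii).} Take $\mathcal F=\mathcal M_X$ and $\mathcal G=\mathcal M_Y$. Here $\mathcal M_Y\subseteq\operatorname{IS}_\vee(Y)$ by \cite[Proposition~2.9]{AydinEmelyanovGorokhova25}. Kernel separation holds because $Y$ is semiprime, as noted in the introduction: if $xu=0$ for all $u\ge0$, then $x^2=0$, so $x=0$. Example~\ref{ex:special_cases_FG}(v) then identifies $L_{\mathcal{FG}}(X,Y)$ with $L_{mo}(X,Y)$.

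The only step needing any care is part (iii), namely the supremum-preservation and kernel-separation properties of the multiplication family. Both are already recorded earlier in the paper, so nothing substantive remains.
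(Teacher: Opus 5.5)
Your proposal is correct and follows the paper's route. You specialize via Example~\ref{ex:special_cases_FG}, check that $\mathcal G\subseteq\operatorname{IS}_{\vee}(Y)$ has the kernel separation property, and combine the Riesz-subspace conclusion of Corollary~\ref{cor:modulus} with the solidity given by Proposition~\ref{prop:ideal_properties}; your explicit verifications for $\mathcal T_Y$, $\mathcal I_Y$, $\mathcal M_Y$ and your use of Corollary~\ref{cor:so_order_bounded} in (ii) simply spell out what the paper calls immediate.
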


Part~(i) improves \cite[Corollary~3]{TuranGurkok24}.
For strongly order continuous operators, the band property has
previously been studied in the scalar-valued case $Y=\mathbb R$;
see \cite{Bahramnezhad_Azar}.

In general, $L_{\mathcal{FG}}(X,Y)$ need not be a band in
$L_b(X,Y)$, even though it is an ideal.
In particular, the space of order bounded $uo$-continuous operators
need not be a band; see
\cite[Example~2.12]{TuranAltinGurkok22}. The following example shows
that $L_{so}(X,Y)$ need not be a band in $L_b(X,Y)$ either.

\begin{example}\label{ex:linfty_not_band}
Let $X=Y=\ell_\infty$, and take
$\mathcal F=\mathcal T_{\ell_\infty}$ and
$\mathcal G=\mathcal I_{\ell_\infty}$.
For each $n\in\mathbb N$, define $P_n:\ell_\infty\to\ell_\infty$ by
\[
P_n(x_1,x_2,\ldots)
=
(x_1,\ldots,x_n,0,0,\ldots).
\]
Then $P_n$ is a band projection, and hence an order continuous lattice
homomorphism. Therefore, by \cite[Theorem~3.3]{TuranAltinGurkok22},
$P_n$ is $uo$-continuous. Thus, if
$x_\alpha\xrightarrow{uo}0$ in $\ell_\infty$, then
\[
P_nx_\alpha\xrightarrow{uo}0.
\]
Since $uo$-convergence in $\ell_\infty$ is coordinatewise convergence,
we have
\[
x_\alpha(i)\longrightarrow0
\qquad (i=1,\ldots,n).
\]
Hence, for each $m\in\mathbb N$, there exists $\alpha_m$ such that
\[
|x_\alpha(i)|\leq\frac1m
\qquad
(\alpha\geq\alpha_m,\; i=1,\ldots,n).
\]
Therefore,
\[
|P_nx_\alpha|
\leq\frac1m\sum_{i=1}^n e_i
\qquad(\alpha\geq\alpha_m).
\]
Since
\[
\frac1m\sum_{i=1}^n e_i\downarrow0,
\]
it follows that
\[
P_nx_\alpha\xrightarrow{o}0.
\]
Therefore,
$P_n\in L_{\mathcal{FG}}(\ell_\infty)$ for every $n\in\mathbb N$.

On the other hand, $0\leq P_n\uparrow I$ in
$L_b(\ell_\infty)$. However, $I$ is not
$\mathcal{FG}$-order continuous. Indeed, the sequence
$x_n=ne_n$ is disjoint and hence $x_n\xrightarrow{uo}0$.
Since $(x_n)$ is not eventually order bounded, it cannot order
converge to zero. Thus
\[
Ix_n\not\xrightarrow{o}0,
\]
and consequently $I\notin L_{\mathcal{FG}}(\ell_\infty)$.
Therefore, $L_{\mathcal{FG}}(\ell_\infty)$ is not a band in
$L_b(\ell_\infty)$.
\end{example}

We next record a finite-dimensional observation. In
\cite[Exercise~84.6]{Zaanen83}, it is shown that every linear operator
from a finite-dimensional Archimedean Riesz space into a Dedekind
complete Riesz space is order continuous. The following lemma shows
that the Dedekind completeness assumption on the range space is not
needed.

\begin{lemma}\label{lem:finite_dimensional_order_continuity}
Let $X$ be a finite-dimensional Archimedean Riesz space and let $Y$ be
a Riesz space. Then every linear operator $T:X\to Y$ is order
continuous. In particular,
\[
L_b(X,Y)=L_n(X,Y)=L_c(X,Y).
\]
\end{lemma}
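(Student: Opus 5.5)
A finite-dimensional Archimedean Riesz space is lattice isomorphic to $\mathbb R^n$ with the coordinatewise order. I would invoke this classical fact (see, e.g., \cite{LuxemburgZaanen71} or \cite{AliprantisBurkinshaw06}) to identify $X$ with $\mathbb R^n$, where $e_1,\dots,e_n$ are the unit vectors (pairwise disjoint atoms) and $e=e_1+\dots+e_n$. Every $x\in X$ then satisfies $|x|\le \|x\|_\infty e$. This reduces the statement to a concrete estimate.

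First I would show that order convergence in $X$ is coordinatewise convergence. Suppose $x_\alpha\xrightarrow{o}0$ with dominating net $z_\beta\downarrow0$. Then each coordinate satisfies $z_\beta(i)\downarrow0$ in $\mathbb R$. Indeed, $\inf_\beta z_\beta(i)=c>0$ would give $z_\beta\ge c e_i$ for all $\beta$, contradicting $\inf z_\beta=0$. Hence $\|x_\alpha\|_\infty\to0$ in $\mathbb R$.

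Next, given a linear $T$, put $y=\sum_{i=1}^n|Te_i|\in Y_+$. By linearity,
\[
|Tx|\le\sum_{i=1}^n|x(i)|\,|Te_i|\le\|x\|_\infty\, y
\qquad(x\in X).
\]
For $x_\alpha\xrightarrow{o}0$, choose $\alpha_m$ with $\|x_\alpha\|_\infty\le 1/m$ for $\alpha\ge\alpha_m$. Then $|Tx_\alpha|\le \frac1m y$ for $\alpha\ge\alpha_m$. Since $Y$ is Archimedean, $\frac1m y\downarrow0$, so $Tx_\alpha\xrightarrow{o}0$. The same bound shows that $T$ is order bounded, because $T[-e,e]\subseteq[-y,y]$ and every order interval lies in some $[-ke,ke]$. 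This gives $L_b(X,Y)=L_n(X,Y)$. Since $L_n\subseteq L_c\subseteq L_b$, all three spaces coincide.

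The only delicate point is the structural identification of $X$ with $\mathbb R^n$; everything else is routine. Notably, the argument never uses Dedekind completeness of $Y$, only that $Y$ is Archimedean, which the paper assumes throughout. I would cite the structure theorem rather than reprove it. If a self-contained route is preferred, one can pick a maximal disjoint system of atoms (there are at most $n$ of them) and use the Archimedean property to show that their span is all of $X$.
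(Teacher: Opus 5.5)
Your proof is correct and follows the paper's route. You identify $X$ with $\mathbb R^n$, use the estimate $|Tx|\le\|x\|_\infty\sum_{i=1}^n|Te_i|$, and conclude via the Archimedean property of $Y$; you also justify two points the paper leaves implicit, namely that order null nets in $\mathbb R^n$ converge coordinatewise and that every linear $T$ is order bounded. The only difference is the final equality: you get $L_c(X,Y)=L_n(X,Y)$ from the trivial chain $L_n\subseteq L_c\subseteq L_b=L_n$, which is simpler than the paper's appeal to order separability of $X$ and \cite[Theorem~84.4]{Zaanen83}.
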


\begin{proof}
Since $X$ is a finite-dimensional Archimedean Riesz space, we may
identify $X$ with $\mathbb R^n$ equipped with its coordinatewise
order. Let $x_\alpha\xrightarrow{o}0$ in $X$. Then
\[
x_\alpha(i)\longrightarrow0
\qquad (i=1,\ldots,n).
\]
Hence, for each $m\in\mathbb N$, there exists $\alpha_m$ such that
\[
|x_\alpha(i)|\leq\frac1m
\qquad
(\alpha\geq\alpha_m,\; i=1,\ldots,n).
\]
Let $(e_i)_{i=1}^n$ denote the standard unit vectors and set
\[
u=\sum_{i=1}^n|Te_i|.
\]
Then, for $\alpha\geq\alpha_m$,
\[
|Tx_\alpha|
\leq\sum_{i=1}^n |x_\alpha(i)|\,|Te_i|
\leq\frac1m u.
\]
Since $Y$ is Archimedean, $\frac1m u\downarrow0$. Therefore
$Tx_\alpha\xrightarrow{o}0$, and hence $T$ is order continuous.

Thus every order bounded operator from $X$ into $Y$ is order
continuous, and consequently
\[
L_b(X,Y)=L_n(X,Y).
\]
Moreover, every finite-dimensional Riesz space is order separable.
Hence, by \cite[Theorem~84.4]{Zaanen83},
\[
L_c(X,Y)\subseteq L_n(X,Y).
\]
Since $L_n(X,Y)\subseteq L_c(X,Y)$, we conclude that
\[
L_b(X,Y)=L_n(X,Y)=L_c(X,Y).
\]
\end{proof}
\begin{lemma}\label{lem:finite_dimensional_F_order}
Let $X$ be a finite-dimensional Riesz space, and let
$\mathcal F\subseteq\operatorname{IS}(X)$ satisfy the kernel
separation condition.
Then
\[
x_\alpha\xrightarrow{\mathcal F\text{-}o}0
\quad\Longrightarrow\quad
x_\alpha\xrightarrow{o}0
\]
for every net $(x_\alpha)$ in $X$.
If, in addition, $\mathcal F\subseteq\operatorname{IS}_o(X)$,
then $\mathcal F$-order convergence and order convergence coincide
on $X$.
\end{lemma}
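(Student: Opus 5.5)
The plan is to reduce everything to coordinatewise convergence in $\mathbb R^n$. Since all Riesz spaces in the paper are Archimedean, I would identify $X$ with $\mathbb R^n$ under the coordinatewise order, exactly as in the proof of Lemma~\ref{lem:finite_dimensional_order_continuity}. I will use the standard fact that in $\mathbb R^n$ a net order converges to $0$ if and only if it converges coordinatewise. One direction holds because order null nets are eventually dominated by some $z_\beta\downarrow0$, and each coordinate of $z_\beta$ decreases to $0$. The other direction is the bound $|x_\alpha|\le\frac1m\sum_i e_i$ used in that lemma. Thus it suffices to show that $x_\alpha(i)\to0$ for each $i$. Note that, unlike Lemma~\ref{lem:order-bounded-F-null}, we cannot assume $F\in\operatorname{IS}_{\vee}(X)$ or that the net is order bounded, so that lemma cannot simply be invoked.

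\textbf{Key step.} Fix a coordinate $i$. By the kernel separation property, choose $F_i\in\mathcal F$ with $F_i(e_i)\neq0$. For every $\varepsilon>0$ we also have $F_i(\varepsilon e_i)\neq0$. Indeed, take $k\in\mathbb N$ with $k\varepsilon\ge1$. Monotonicity and subadditivity give
\[
0<F_i(e_i)\le F_i(k\varepsilon e_i)\le kF_i(\varepsilon e_i).
\]
Now suppose $x_\alpha(i)\not\to0$. Then there are $\varepsilon>0$ and a cofinal set of indices with $|x_\alpha(i)|\ge\varepsilon$. For those indices $|x_\alpha|\ge\varepsilon e_i$, and hence
\[
F_i(|x_\alpha|)\ge F_i(\varepsilon e_i)=:w>0.
\]
Since $w$ is a nonzero positive vector, some coordinate $j$ has $w(j)>0$. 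But $F_i(|x_\alpha|)\xrightarrow{o}0$ in $\mathbb R^n$, which means coordinatewise convergence. So eventually $F_i(|x_\alpha|)(j)<w(j)$, which contradicts cofinality. Hence every coordinate tends to $0$, and therefore $x_\alpha\xrightarrow{o}0$.

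\textbf{Second assertion.} If $\mathcal F\subseteq\operatorname{IS}_o(X)$ and $x_\alpha\xrightarrow{o}0$, then $|x_\alpha|\xrightarrow{o}0$. Order continuity of each $F$, together with $F(0)=0$, gives $F(|x_\alpha|)\xrightarrow{o}0$, so $x_\alpha\xrightarrow{\mathcal F\text{-}o}0$. Combined with the first part, the two convergences coincide.

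The main obstacle is the key step. The difficulty is getting a uniform positive lower bound for $F_i(|x_\alpha|)$ from a nonzero coordinate without any continuity of $F_i$, and subadditivity applied to scalar multiples is what provides it.
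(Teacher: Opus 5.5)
Your proof is correct and takes essentially the paper's route: identify $X$ with $\mathbb R^n$, pick a coordinate with $x_\alpha(i)\not\to0$, and use kernel separation with monotonicity to get a cofinal nonzero lower bound $F(\varepsilon e_i)\le F(|x_\alpha|)$, which contradicts $F(|x_\alpha|)\xrightarrow{o}0$. The only difference is that the paper fixes $\varepsilon$ first and applies kernel separation directly to $\varepsilon e_i$, so your subadditivity step (and the ``main obstacle'' it resolves) is harmless but not actually needed.
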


\begin{proof}
By \cite[Theorem~26.11]{LuxemburgZaanen71}, every
finite-dimensional Archimedean Riesz space is Riesz isomorphic to
$\mathbb R^n$ with the coordinatewise order. Thus, it suffices to
consider $X=\mathbb R^n$.

Since order convergence in $\mathbb R^n$ is coordinatewise convergence,
there exists $i\in\{1,\ldots,n\}$ such that
$x_\alpha(i)\not\to0$. Hence there exists $\varepsilon>0$ such that,
for every $\alpha_0$, there is some $\alpha\geq\alpha_0$ satisfying
\[
|x_\alpha(i)|\geq\varepsilon,
\]
and therefore
\[
\varepsilon e_i\leq |x_\alpha|.
\] By the kernel separation property, there exists
$F_0\in\mathcal F$ such that
$F_0(\varepsilon e_i)>0$. Since $F_0$ is increasing, for every
$\alpha_0$ there exists $\alpha\geq\alpha_0$ such that
\[
0<F_0(\varepsilon e_i)\leq F_0(|x_\alpha|).
\]
This contradicts
$F_0(|x_\alpha|)\xrightarrow{o}0$. Hence
$x_\alpha\xrightarrow{o}0$.

If $\mathcal F\subseteq\operatorname{IS}_o(X)$, the converse follows
directly from the order continuity of every $F\in\mathcal F$.
\end{proof}

\begin{theorem}\label{thm:finite_dimensional_FG}
Let $X$ and $Y$ be Riesz spaces, let
$\mathcal F\subseteq\operatorname{IS}(X)$ satisfy the kernel
separation condition, and let
$\mathcal G\subseteq\operatorname{IS}_o(Y)$.
If $X$ is finite-dimensional, then
\[
L_{\mathcal{FG}}(X,Y)=L_n(X,Y).
\]
\end{theorem}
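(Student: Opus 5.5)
We prove the two inclusions separately. In both directions we use Lemma~\ref{lem:finite_dimensional_order_continuity}, which says that every linear operator $T:X\to Y$ is order continuous and order bounded. In particular $L_b(X,Y)=L_n(X,Y)$ coincides with the space of all linear operators from $X$ into $Y$. We also use Lemma~\ref{lem:finite_dimensional_F_order}, which says that, by the kernel separation property of $\mathcal F$, every $\mathcal F$-order null net in $X$ is order null.

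\emph{The inclusion $L_n(X,Y)\subseteq L_{\mathcal{FG}}(X,Y)$.} Let $T\in L_n(X,Y)$ and let $x_\alpha\xrightarrow{\mathcal F\text{-}o}0$ in $X$. Lemma~\ref{lem:finite_dimensional_F_order} gives $x_\alpha\xrightarrow{o}0$. Order continuity of $T$ then gives $Tx_\alpha\xrightarrow{o}0$, so $|Tx_\alpha|\xrightarrow{o}0$. Each $G\in\mathcal G$ belongs to $\operatorname{IS}_o(Y)$, so $G(|Tx_\alpha|)\xrightarrow{o}0$ for every $G\in\mathcal G$. This means $Tx_\alpha\xrightarrow{\mathcal G\text{-}o}0$, so $T$ is $\mathcal{FG}$-order continuous. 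Since $T$ is order bounded, $T\in L_{\mathcal{FG}}(X,Y)$.

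\emph{The inclusion $L_{\mathcal{FG}}(X,Y)\subseteq L_n(X,Y)$.} This is immediate: every element of $L_{\mathcal{FG}}(X,Y)$ is a linear operator on the finite-dimensional space $X$. By Lemma~\ref{lem:finite_dimensional_order_continuity} it is therefore order continuous. No hypothesis on $\mathcal G$ is needed for this direction, and no use of Theorem~\ref{thm:FG-continuous-implies-order-continuous} is required.

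The only step that needs any care is the use of order continuity of the maps $G$. One must check that $G(|Tx_\alpha|)\xrightarrow{o}0$ follows from $|Tx_\alpha|\xrightarrow{o}0$, given that $G$ is increasing, $G(0)=0$, and $G$ is order continuous. We read membership in $\operatorname{IS}_o(Y)$ exactly as the statement that $G$ maps order null nets in $Y_+$ to order null nets. This is the same reading used in the proof of Theorem~\ref{thm:FG-continuous-implies-order-continuous} and in the final assertion of Lemma~\ref{lem:finite_dimensional_F_order}. With that reading the argument above is complete.
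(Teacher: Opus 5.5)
Your proof is correct and follows the paper's argument essentially verbatim. For $L_n(X,Y)\subseteq L_{\mathcal{FG}}(X,Y)$ you pass from $\mathcal F$-order null to order null via Lemma~\ref{lem:finite_dimensional_F_order}, then use order continuity of $T$ and $\mathcal G\subseteq\operatorname{IS}_o(Y)$; the reverse inclusion comes from $L_{\mathcal{FG}}(X,Y)\subseteq L_b(X,Y)=L_n(X,Y)$ by Lemma~\ref{lem:finite_dimensional_order_continuity}, exactly as in the paper.
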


\begin{proof}
Let $T\in L_n(X,Y)$ and suppose that
$x_\alpha\xrightarrow{\mathcal F\text{-}o}0$.
By Lemma~\ref{lem:finite_dimensional_F_order},
\[
x_\alpha\xrightarrow{o}0.
\]
Since $T$ is order continuous,
$Tx_\alpha\xrightarrow{o}0$. As
$\mathcal G\subseteq\operatorname{IS}_o(Y)$, it follows that
\[
Tx_\alpha\xrightarrow{\mathcal G\text{-}o}0.
\]
Hence
\[
L_n(X,Y)\subseteq L_{\mathcal{FG}}(X,Y).
\]

Conversely, by definition,
$L_{\mathcal{FG}}(X,Y)\subseteq L_b(X,Y)$. Since $X$ is
finite-dimensional, Lemma~\ref{lem:finite_dimensional_order_continuity}
gives
\[
L_b(X,Y)=L_n(X,Y).
\]
Therefore
\[
L_{\mathcal{FG}}(X,Y)=L_n(X,Y).
\]
\end{proof}

Recall that a family
$\mathcal F\subseteq\operatorname{IS}(X)$ is \emph{disjointly null}
if every disjoint sequence in $X$ is $\mathcal F$-order null.
We next give a sufficient condition under which the band property of
$L_{\mathcal{FG}}(X,Y)$ forces $X$ to be finite-dimensional.

\begin{theorem}\label{thm:FG_band_finite_dimensional}
Let $X$ be a Banach lattice and let $Y$ be a Riesz space. Let
$\mathcal F\subseteq\operatorname{IS}(X)$ be disjointly null, and let
$\mathcal G\subseteq\operatorname{IS}_o(Y)$ satisfy the kernel
separation condition. Suppose that $X^\sim_{\mathcal F}$ separates
the points of $X$. If $L_{\mathcal{FG}}(X,Y)$ is a band in
$L_b(X,Y)$, then $X$ is finite-dimensional.
\end{theorem}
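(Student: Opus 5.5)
The plan is to argue by contradiction. Assume $L_{\mathcal{FG}}(X,Y)$ is a band in $L_b(X,Y)$ while $X$ is infinite-dimensional, and produce a rank-one operator $\varphi\otimes y$ that lies in the band but is not $\mathcal{FG}$-order continuous. We may assume $Y\neq\{0\}$, since otherwise there is nothing to prove. Fix $0<y\in Y$; by the kernel separation property there is $G_0\in\mathcal G$ with $G_0(y)\neq0$.

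\emph{Step 1: rank-one operators with $f\in X^\sim_{\mathcal F}$ lie in $L_{\mathcal{FG}}(X,Y)$.} Take $f\in X^\sim_{\mathcal F}$ and $x_\alpha\xrightarrow{\mathcal F\text{-}o}0$. Then $f(x_\alpha)\to0$ in $\mathbb R$, so $|f(x_\alpha)|y\xrightarrow{o}0$ in $Y$ because $Y$ is Archimedean. Since $\mathcal G\subseteq\operatorname{IS}_o(Y)$, we get $f(x_\alpha)y\xrightarrow{\mathcal G\text{-}o}0$. Hence $f\otimes y\in L_{\mathcal{FG}}(X,Y)$, and this operator is clearly order bounded.

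\emph{Step 2 (main obstacle): an arbitrary $\varphi\in X^*_+$ gives a member of the band.} Here $X^*_+=X^\sim_+$ because $X$ is a Banach lattice. The goal is to show that $\varphi\otimes y$ belongs to the band of $L_b(X,Y)$ generated by $\{f\otimes y:f\in X^\sim_{\mathcal F}\}$, and hence, by the band assumption, to $L_{\mathcal{FG}}(X,Y)$. My first attempt is to show that the band generated by $X^\sim_{\mathcal F}$ in $X^\sim$ is all of $X^\sim$, using that $X^\sim_{\mathcal F}$ separates points; for this I would first check, via the argument of Remark~\ref{rem:Fsigma_Riesz_space} adapted to nets, that $X^\sim_{\mathcal F}$ is at least a Riesz subspace, or better an ideal. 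Once that is available, $\varphi$ is an order limit (a supremum of an upward directed family) of elements $f_\gamma\in(X^\sim_{\mathcal F})_+$. The map $f\mapsto f\otimes y$ is positive and, on suprema of positive functionals, pointwise order continuous, since $(\sup f_\gamma)(x)y=\sup(f_\gamma(x)y)$ for $x\ge0$. This would give $0\le f_\gamma\otimes y\uparrow\varphi\otimes y$ in $L_b(X,Y)$, so $\varphi\otimes y$ lies in the band. Justifying that the band generated by $X^\sim_{\mathcal F}$ is everything is the delicate point. If the ideal version fails, I would fall back on working with $\varphi$ that are suprema of elements of $X^\sim_{\mathcal F}$ directly, which suffices for Step 3.

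\emph{Step 3: the contradiction.} Since $X$ is an infinite-dimensional Banach lattice, it contains a disjoint sequence $(x_n)$ of positive norm-one vectors. Choose $g_n\in X^*_+$ with $\|g_n\|=1$ and $g_n(x_n)=1$, and set $\varphi=\sum_n2^{-n}g_n\in X^*_+$. Then $\varphi(x_n)>0$ for every $n$. Put $z_n=x_n/\varphi(x_n)$. The sequence $(z_n)$ is disjoint, so it is $\mathcal F$-order null because $\mathcal F$ is disjointly null. However, $(\varphi\otimes y)(z_n)=y$ for all $n$, and $G_0(y)\neq0$ is constant, so the sequence is not $\mathcal G$-order null. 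Thus $\varphi\otimes y\notin L_{\mathcal{FG}}(X,Y)$, contradicting Step 2. Therefore $X$ is finite-dimensional. If Step 2 only yields suprema of elements of $X^\sim_{\mathcal F}$, I would choose the $g_n$ among those suprema; this is possible because $X^\sim_{\mathcal F}$ separates points, so for each $n$ some positive part satisfies $f(x_n)>0$.
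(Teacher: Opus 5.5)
Step 1 and the contradiction at the end of Step 3 are sound. The problem is the primary route in Step 2, which rests on a false lemma. A point-separating subspace of $X^\sim$, even an ideal, need not generate all of $X^\sim$ as a band, and this already fails under the other hypotheses of the theorem.

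Take $X=\ell_\infty$ and $\mathcal F=\mathcal T_X$. This family is disjointly null. Moreover $X^\sim_{\mathcal F}=X^\sim_{uo}$ contains every coordinate functional, so it separates points. But every $uo$-continuous functional is order continuous (Corollary~\ref{cor:consequences-order-continuity}(i)). Hence the band generated by $X^\sim_{\mathcal F}$ lies inside $(\ell_\infty)^\sim_n=\ell_1$, and it misses every Banach limit. So for the $\varphi$ you first build in Step 3, from norming functionals $g_n\in X^*_+$ with $\|g_n\|=1=g_n(x_n)$, nothing places $\varphi\otimes y$ in the band generated by $\{f\otimes y: f\in X^\sim_{\mathcal F}\}$. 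The band hypothesis therefore cannot be applied to it.

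Your fallback is the correct argument, and it is exactly the paper's proof. Make it explicit as follows.
\begin{itemize}
\item Choose the $g_n$ in $(X^\sim_{\mathcal F})_+$ themselves (not ``among suprema'') with $g_n(x_n)>0$. This uses that $X^\sim_{\mathcal F}$ is a Riesz subspace, which follows from Corollary~\ref{cor:modulus} with $Y=\mathbb R$ and $\mathcal G=\mathcal I_{\mathbb R}$. Your net version of Remark~\ref{rem:Fsigma_Riesz_space} also works if you index by $\Gamma\times\mathbb N$.
\item Set $\varphi=\sum_n 2^{-n}g_n/\|g_n\|$.
\item Note that the partial sums $\varphi_N$ lie in $X^\sim_{\mathcal F}$ and that $\varphi_N\otimes y\uparrow\varphi\otimes y$ in $L_b(X,Y)$, because $Y$ is Archimedean. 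Only this countable supremum is needed.
\end{itemize}
You cannot in general impose both $\|g_n\|=1$ and $g_n(x_n)=1$ inside $X^\sim_{\mathcal F}$. You do not need to, because your $z_n=x_n/\varphi(x_n)$ only uses $\varphi(x_n)>0$.

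Finally, $Y=\{0\}$ is not a case with ``nothing to prove''. There $L_{\mathcal{FG}}(X,Y)=L_b(X,Y)=\{0\}$ is trivially a band while $X$ may be infinite-dimensional, so the statement actually fails. Thus $Y\neq\{0\}$ is a tacit hypothesis; the paper's proof likewise assumes it by fixing $0<u\in Y$.
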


\begin{proof}
Suppose that $X$ is infinite-dimensional. Since $X$ is Archimedean,
\cite[Theorem~26.10]{LuxemburgZaanen71} shows that $X$ contains an
infinite disjoint system of nonzero elements. Thus, we may choose a
disjoint sequence
\[
(x_n)\subseteq X_+\setminus\{0\}.
\]
Since $X^\sim_{\mathcal F}$ separates the points of $X$, for each
$n\in\mathbb N$ there exists
\[
f_n\in (X^\sim_{\mathcal F})_+
\]
such that $f_n(x_n)=1$. Each $f_n$ is norm continuous, since every
positive linear functional on a Banach lattice is continuous. 

Fix $0<u\in Y$ and define
\[
T_n(x)=
\left(
\sum_{i=1}^{n}\frac{f_i(x)}{2^i\|f_i\|}
\right)u,
\qquad x\in X.
\]
Since each $f_i\in X^\sim_{\mathcal F}$ and
$\mathcal G\subseteq\operatorname{IS}_o(Y)$,
\[
T_n\in L_{\mathcal{FG}}(X,Y)
\qquad(n\in\mathbb N).
\]

The series
\[
f=\sum_{i=1}^{\infty}\frac{f_i}{2^i\|f_i\|}
\]
converges in norm. Define $T(x)=f(x)u$ for $x\in X$. Then
$T\in L_b(X,Y)$ and
\[
0\leq T_n\uparrow T.
\]
Since $L_{\mathcal{FG}}(X,Y)$ is a band in $L_b(X,Y)$, we obtain
$T\in L_{\mathcal{FG}}(X,Y)$.

Now set
\[
y_n=2^n\|f_n\|x_n.
\]
The sequence $(y_n)$ is disjoint. Since $\mathcal F$ is disjointly
null,
\[
y_n\xrightarrow{\mathcal F\text{-}o}0.
\]
On the other hand,
\[
T(y_n)
\geq
\frac{f_n(y_n)}{2^n\|f_n\|}u
=u.
\]
Since $\mathcal G$ satisfies the kernel separation condition and
$u>0$, there exists $G_0\in\mathcal G$ such that $G_0(u)>0$.
By the monotonicity of $G_0$,
\[
G_0(T(y_n))\geq G_0(u)>0
\qquad(n\in\mathbb N).
\]
Thus
\[
T(y_n)\not\xrightarrow{\mathcal G\text{-}o}0,
\]
contradicting $T\in L_{\mathcal{FG}}(X,Y)$. Hence $X$ is
finite-dimensional.
\end{proof}

\begin{corollary}\label{cor:FG_band_characterization}
Let $X$ be a Banach lattice and let $Y$ be a Dedekind
complete Riesz space. Let
$\mathcal F\subseteq\operatorname{IS}(X)$ be disjointly null and
satisfy the kernel separation condition, and let
$\mathcal G\subseteq\operatorname{IS}_o(Y)$ satisfy the kernel
separation condition. Suppose that $X^\sim_{\mathcal F}$ separates
the points of $X$. Then the following statements are equivalent:
\begin{enumerate}
\item[(i)] $X$ is finite-dimensional;
\item[(ii)] $L_{\mathcal{FG}}(X,Y)=L_n(X,Y)$;
\item[(iii)] $L_{\mathcal{FG}}(X,Y)$ is a band in $L_b(X,Y)$.
\end{enumerate}
\end{corollary}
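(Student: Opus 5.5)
The proof will be a cycle of implications, each direction obtained by citing the appropriate earlier result.

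\emph{(i)$\Rightarrow$(ii).} If $X$ is finite-dimensional, Theorem~\ref{thm:finite_dimensional_FG} gives $L_{\mathcal{FG}}(X,Y)=L_n(X,Y)$ directly. Its hypotheses are that $\mathcal F$ has the kernel separation property and that $\mathcal G\subseteq\operatorname{IS}_o(Y)$, and both are assumed here.

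\emph{(ii)$\Rightarrow$(iii).} It suffices to know that $L_n(X,Y)$ is a band in $L_b(X,Y)$ when $Y$ is Dedekind complete. This is the classical Ogasawara theorem (see \cite{AliprantisBurkinshaw06}), and it is precisely where the Dedekind completeness of $Y$ enters. The equality in (ii) then transfers the band property to $L_{\mathcal{FG}}(X,Y)$.

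\emph{(iii)$\Rightarrow$(i).} This is exactly Theorem~\ref{thm:FG_band_finite_dimensional}. The hypotheses used there are all available: $X$ is a Banach lattice, $\mathcal F$ is disjointly null, $\mathcal G\subseteq\operatorname{IS}_o(Y)$ has the kernel separation property, and $X^\sim_{\mathcal F}$ separates the points of $X$.

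Each step is a citation, so no step should be a real obstacle. The only point needing care is to check, in the first direction, that Theorem~\ref{thm:finite_dimensional_FG} does not tacitly require the Archimedean property. It does not, since all spaces are assumed Archimedean throughout the paper.
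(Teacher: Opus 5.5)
Your proposal is correct and is the paper's proof: (i)$\Rightarrow$(ii) by Theorem~\ref{thm:finite_dimensional_FG}, (ii)$\Rightarrow$(iii) by Ogasawara's theorem that $L_n(X,Y)$ is a band in $L_b(X,Y)$ for Dedekind complete $Y$ (the paper cites \cite[Theorem~1.57]{AliprantisBurkinshaw06}), and (iii)$\Rightarrow$(i) by Theorem~\ref{thm:FG_band_finite_dimensional}. Your side check on the Archimedean property is also fine, since every Banach lattice is Archimedean.
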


\begin{proof}
The implication $(i)\Rightarrow(ii)$ follows from
Theorem~\ref{thm:finite_dimensional_FG}. If $(ii)$ holds, then
\cite[Theorem~1.57]{AliprantisBurkinshaw06} implies that
$L_{\mathcal{FG}}(X,Y)=L_n(X,Y)$ is a band in $L_b(X,Y)$.
Thus $(ii)\Rightarrow(iii)$. Finally,
$(iii)\Rightarrow(i)$ follows from
Theorem~\ref{thm:FG_band_finite_dimensional}.
\end{proof}
\begin{remark}\label{rem:band_conditions}
\begin{enumerate}
\item[(i)] The disjointly null assumption in
Theorem~\ref{thm:FG_band_finite_dimensional} cannot, in general, be
omitted. Indeed, let $X=\ell_\infty$, $\mathcal F=\mathcal I_X$,
$Y=\mathbb R$, and $\mathcal G=\mathcal I_{\mathbb R}$. Then
\[
L_{\mathcal{FG}}(X,Y)=X_n^\sim
\]
is a band in $X^\sim=L_b(X,\mathbb R)$, although $X$ is
infinite-dimensional. On the other hand, $\mathcal F$ is not
disjointly null, since the disjoint sequence $(ne_n)$ is not
order null.

\item[(ii)] Recall that $X^\sim_{\mathcal F\sigma}$ is a Riesz space by
Remark~\ref{rem:Fsigma_Riesz_space}. With the appropriate modifications,
Theorems~\ref{thm:finite_dimensional_FG}
and~\ref{thm:FG_band_finite_dimensional}, as well as
Corollary~\ref{cor:FG_band_characterization}, also hold for
$\mathcal{FG}\sigma$-order continuous operators.
\end{enumerate}
\end{remark}

We now have the following results.

\begin{corollary}\label{cor:uo_so_band_characterizations}
Let $X$ be a Banach lattice and let $Y$ be a Dedekind complete
Riesz space. Suppose that $X^\sim_{uo}$ separates the points of $X$.
Then the following statements are equivalent:

\begin{enumerate}
\item[(i)] $X$ is finite-dimensional;
\item[(ii)] $L_{uo}(X,Y)=L_n(X,Y)$;
\item[(iii)] $L_{so}(X,Y)=L_n(X,Y)$;
\item[(iv)] $L_{uo}(X,Y)$ is a band in $L_b(X,Y)$;
\item[(v)] $L_{so}(X,Y)$ is a band in $L_b(X,Y)$.
\end{enumerate}
\end{corollary}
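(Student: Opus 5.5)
The plan is to apply Corollary~\ref{cor:FG_band_characterization} twice. The first application uses the truncation pair $(\mathcal F,\mathcal G)=(\mathcal T_X,\mathcal T_Y)$, which by Example~\ref{ex:special_cases_FG}(ii) gives $L_{\mathcal{FG}}(X,Y)=L_{uo}(X,Y)$. The second uses $(\mathcal T_X,\mathcal I_Y)$, which by Example~\ref{ex:special_cases_FG}(iii) gives the strongly order continuous operators. In the second case Corollary~\ref{cor:so_order_bounded} shows that all such operators are order bounded, so $L_{\mathcal{FG}}(X,Y)=L_{so}(X,Y)$. Together these yield (i)$\Leftrightarrow$(ii)$\Leftrightarrow$(iv) and (i)$\Leftrightarrow$(iii)$\Leftrightarrow$(v), which is the full equivalence.

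It remains to check the hypotheses of Corollary~\ref{cor:FG_band_characterization} for these families. These hypotheses are: $\mathcal T_X$ is disjointly null and has the kernel separation property; $\mathcal G$ consists of order continuous maps and has the kernel separation property; and $X^\sim_{\mathcal T_X}$ separates the points of $X$.

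\begin{itemize}
\item \emph{Kernel separation.} This holds for $\mathcal T_X$, $\mathcal T_Y$ and $\mathcal I_Y$, as noted in the preliminaries. For $\mathcal T_X$, take $u=x$: then $T_x(x)=x$.
\item \emph{Disjointly null.} Every disjoint sequence in $X$ is $uo$-null; this is a standard fact (\cite{GaoTroitskyXanthos17}, also used in Example~\ref{ex:linfty_not_band}). Alternatively, $\mathcal T_X\subseteq\operatorname{IS}_b(X)\cap\operatorname{IS}_d(X)$, since $T_u$ is bounded by $u$ and preserves disjointness, so \cite[Proposition~7]{Tantrawan25} applies.
\item \emph{Order continuity of $\mathcal G$.} This is immediate for $\mathcal I_Y$. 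For $\mathcal T_Y$: if $y_\alpha\xrightarrow{o}0$, then $0\le |y_\alpha|\wedge u\le|y_\alpha|$, so $T_u(|y_\alpha|)\xrightarrow{o}0$. Hence $\mathcal T_Y\subseteq\operatorname{IS}_o(Y)$.
\item \emph{Point separation.} The paper observed that $X^\sim_{\mathcal T_X}=X^\sim_{uo}$, so this is exactly the standing hypothesis.
\end{itemize}

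The only delicate point is the identification $L_{\mathcal{FG}}(X,Y)=L_{so}(X,Y)$, that is, that the order boundedness built into $L_{\mathcal{FG}}$ is harmless. Corollary~\ref{cor:so_order_bounded} settles this. Everything else is bookkeeping. In particular, the statement (iii)$\Leftrightarrow$(v) can also be derived directly from the second application, without passing through (ii) or (iv).
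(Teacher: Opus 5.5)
Your proposal is correct and follows the same route the paper intends. The paper states this corollary without a separate proof, as a specialization of Corollary~\ref{cor:FG_band_characterization} to the pairs $(\mathcal T_X,\mathcal T_Y)$ and $(\mathcal T_X,\mathcal I_Y)$ via Example~\ref{ex:special_cases_FG} and $X^\sim_{\mathcal T_X}=X^\sim_{uo}$; your checks of kernel separation, disjoint nullity and $\mathcal T_Y,\mathcal I_Y\subseteq\operatorname{IS}_o(Y)$ supply exactly the verifications the paper leaves implicit.
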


The equivalence of \textup{(i)}, \textup{(ii)}, and \textup{(iii)}
recovers \cite[Proposition~5]{TuranGurkok24}.

Notice that $\mathcal M_X$ need not be disjointly null; for instance,
the disjoint sequence $(ne_n)$ in $\ell_\infty$ is not $mo$-null
\cite[Example~2.8]{GurkokTuran25}. Nevertheless, we obtain the
following finite-dimensional result.

\begin{corollary}\label{cor:mo_band_finite_dimensional}
Let $X$ be a finite-dimensional semiprime $f$-algebra and let
$Y$ be an $f$-algebra. Then
\[
L_{mo}(X,Y)=L_n(X,Y).
\]
If, in addition, $Y$ is Dedekind complete, then
$L_{mo}(X,Y)$ is a band in $L_b(X,Y)$.
\end{corollary}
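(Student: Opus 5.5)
The first claim follows from Theorem~\ref{thm:finite_dimensional_FG} with $\mathcal F=\mathcal M_X$ and $\mathcal G=\mathcal M_Y$, once its hypotheses are checked for these families. By Example~\ref{ex:special_cases_FG}, $\mathcal M_X$- and $\mathcal M_Y$-order continuity is $mo$-continuity, so $L_{\mathcal{FG}}(X,Y)=L_{mo}(X,Y)$. Since $X$ is a semiprime $f$-algebra, $\mathcal M_X$ has the kernel separation property, as noted in the preliminaries. It remains to show that $\mathcal M_Y\subseteq\operatorname{IS}_o(Y)$, that is, that each $M_u(y)=yu$ is order continuous on $Y_+$.

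For this step I would use that multiplication by a fixed positive element $u$ in an Archimedean $f$-algebra is an orthomorphism. Orthomorphisms are order continuous; see Aliprantis--Burkinshaw or Zaanen. Hence $y_\alpha\xrightarrow{o}0$ implies $y_\alpha u\xrightarrow{o}0$. This is the only point requiring outside input; everything else is bookkeeping. If one prefers to avoid orthomorphism theory, a direct argument also works. Suppose $|y_\alpha|\le z_\beta$ eventually with $z_\beta\downarrow0$. Then $|y_\alpha|u\le z_\beta u$, so it suffices that $z_\beta u\downarrow0$. This is exactly the supremum- (equivalently infimum-) preservation of $M_u$ recorded earlier via \cite[Proposition~2.9]{AydinEmelyanovGorokhova25}, applied to the decreasing net.

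With both hypotheses verified, Theorem~\ref{thm:finite_dimensional_FG} gives $L_{mo}(X,Y)=L_n(X,Y)$.

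For the band statement, assume in addition that $Y$ is Dedekind complete. Then $L_n(X,Y)$ is a band in $L_b(X,Y)$ by \cite[Theorem~1.57]{AliprantisBurkinshaw06}. By the equality just proved, $L_{mo}(X,Y)$ is therefore a band. Alternatively, Lemma~\ref{lem:finite_dimensional_order_continuity} gives $L_n(X,Y)=L_b(X,Y)$ directly, so $L_{mo}(X,Y)$ is all of $L_b(X,Y)$ and trivially a band.
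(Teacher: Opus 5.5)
Your proposal is correct and follows the route the paper intends. The paper gives no explicit proof, but the corollary is meant to come from Theorem~\ref{thm:finite_dimensional_FG} with $\mathcal F=\mathcal M_X$ (kernel separation coming from semiprimeness) and $\mathcal G=\mathcal M_Y\subseteq\operatorname{IS}_o(Y)$, followed by \cite[Theorem~1.57]{AliprantisBurkinshaw06} for the band property, as in Corollary~\ref{cor:FG_band_characterization}. Your explicit check that each $M_u$ is order continuous, via orthomorphisms or via supremum preservation plus linearity, fills in a step the paper leaves implicit.
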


\begin{corollary}\label{cor:F_order_finite_dimensional}
Let $X$ be a Banach lattice and let
$\mathcal F\subseteq\operatorname{IS}(X)$ be disjointly null.
Suppose that $X^\sim_{\mathcal F}$ separates the points of $X$.
If $\mathcal F$-order convergence and order convergence coincide
in $X$, then $X$ is finite-dimensional.
\end{corollary}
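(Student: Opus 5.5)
The plan is to deduce the statement from Theorem~\ref{thm:FG_band_finite_dimensional} with the identity operator as a test case. I will take $Y=\mathbb R$ and $\mathcal G=\mathcal I_{\mathbb R}$. Then $\mathcal G\subseteq\operatorname{IS}_o(\mathbb R)$ and $\mathcal G$ has the kernel separation property. Moreover, $L_{\mathcal{FG}}(X,\mathbb R)=X^\sim_{\mathcal F}$ and $L_b(X,\mathbb R)=X^\sim$. By Theorem~\ref{thm:FG_band_finite_dimensional}, it therefore suffices to show that $X^\sim_{\mathcal F}$ is a band in $X^\sim$.

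The key step uses the assumption that $\mathcal F$-order convergence and order convergence coincide in $X$. Under this assumption the two duals agree:
\[
X^\sim_{\mathcal F}
=\{f\in X^\sim: x_\alpha\xrightarrow{o}0\Rightarrow f(x_\alpha)\to 0\}
=X^\sim_n .
\]
In $\mathbb R$, order convergence of a net is ordinary convergence, since a convergent net is eventually bounded. Hence $X^\sim_{\mathcal F}$ is exactly the order continuous dual $X^\sim_n$. Now $\mathbb R$ is Dedekind complete, so \cite[Theorem~1.57]{AliprantisBurkinshaw06} shows that $L_n(X,\mathbb R)=X^\sim_n$ is a band in $L_b(X,\mathbb R)=X^\sim$. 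The remaining hypotheses of Theorem~\ref{thm:FG_band_finite_dimensional} are given directly: $X$ is a Banach lattice, $\mathcal F$ is disjointly null, and $X^\sim_{\mathcal F}$ separates the points of $X$. That theorem then yields that $X$ is finite-dimensional.

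There is no real obstacle in this argument. The only point that needs care is the identification $X^\sim_{\mathcal F}=X^\sim_n$. This means checking that the definition of $X^\sim_{\mathcal F}$ uses order convergence in $\mathbb R$, which is ordinary convergence, and that the coincidence of convergences applies to nets and not only to sequences.

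As a cross-check, one can also argue directly, and this route avoids duals. Take a disjoint sequence $(x_n)\subseteq X_+\setminus\{0\}$ and functionals $f_n\in(X^\sim_{\mathcal F})_+$ with $f_n(x_n)=1$. The elements $y_n=2^n\|f_n\|x_n$ form a disjoint sequence, so they are $\mathcal F$-null, hence order null by the assumption, hence eventually order bounded by some $b$. Put $f=\sum_i f_i/(2^i\|f_i\|)\in X^\sim$. Then $f(y_n)\ge 1$ for all $n$, while $f(y_n)\le f(b)$. This does not immediately contradict anything. So the band route through Theorem~\ref{thm:FG_band_finite_dimensional} is the one I would follow.
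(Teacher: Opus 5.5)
Your proposal is correct and matches the paper's proof: take $Y=\mathbb R$ and $\mathcal G=\mathcal I_{\mathbb R}$, note that $X^\sim_{\mathcal F}=X^\sim_n$ under the coincidence hypothesis, and apply Theorem~\ref{thm:FG_band_finite_dimensional}, using that $X^\sim_n$ is a band in $X^\sim$. Your abandoned direct route would also close: each partial sum of $f$ lies in the band $X^\sim_n$, so $f\in X^\sim_n$; then $y_n\xrightarrow{o}0$ forces $f(y_n)\to0$, which contradicts $f(y_n)\geq1$.
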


\begin{proof}
Suppose that $\mathcal F$-order convergence and
order convergence coincide in $X$. Let
\[
Y=\mathbb R
\quad\text{and}\quad
\mathcal G=\mathcal I_{\mathbb R}.
\]
Then
\[
X^\sim_{\mathcal F}=X_n^\sim.
\]
Since $X_n^\sim$ is a band in $X^\sim$,
Theorem~\ref{thm:FG_band_finite_dimensional} implies that $X$ is
finite-dimensional.
\end{proof}

\section{Extension and Ideal Results}

We conclude the paper by studying extension properties of
$\mathcal{FG}$-order continuous operators. Using
Theorem~\ref{thm:supremum-envelope}, we first obtain an
$\mathcal{FG}$-order continuous version of Veksler's extension
theorem and then characterize when a positive operator is
$\mathcal{FG}$-order continuous on a given ideal.

\begin{theorem}\label{thm:extension-theorem}
Let $Z$ be an order dense majorizing Riesz subspace of a Riesz
space $X$, and let $Y$ be a Dedekind complete Riesz space. Let
$\mathcal F\subseteq\operatorname{IS}_o(X)$ satisfy
\[
F(Z_+)\subseteq Z_+
\qquad\text{for every }F\in\mathcal F.
\]
Let
\[
\mathcal G\subseteq\operatorname{IS}_{\vee}(Y)
\]
have the kernel separation property.
If $T:Z\to Y$ is a positive
$\mathcal{FG}$-order continuous operator, then $T$
possesses a unique positive $\mathcal{FG}$-order continuous
extension to all of $X$.
\end{theorem}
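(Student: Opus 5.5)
Following Veksler's construction, we define the candidate extension on the positive cone through the supremum envelope of Theorem~\ref{thm:supremum-envelope}:
\[
\widehat{T}(x)=\sup\{Tz:z\in Z_+,\ z\leq x\},\qquad x\in X_+ .
\]
This is well defined: $Z$ is majorizing, so each $x\in X_+$ satisfies $x\leq z_0$ for some $z_0\in Z_+$. Positivity of $T$ then gives $Tz\leq Tz_0$ for all admissible $z$, so the set is order bounded and the supremum exists because $Y$ is Dedekind complete.

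\textbf{Linearity of $\widehat T$.} We first show that $\widehat{T}$ is additive on $X_+$ and agrees with $T$ on $Z_+$.
\begin{enumerate}
\item[(a)] Agreement on $Z_+$ is immediate from positivity of $T$.
\item[(b)] Superadditivity is immediate from the definition.
\item[(c)] For subadditivity, take $z\in Z_+$ with $z\leq x+y$. Decomposing $z$ by Riesz decomposition in $X$ need not give pieces in $Z$, so instead we use the Kantorovich-style formula via order density. Since $Z$ is order dense and majorizing, every $x\in X_+$ is the supremum of $\{z\in Z_+:z\leq x\}$.
\item[(d)] We then argue through order continuity of $T$ on $Z$. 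Order continuity of $T$ follows from Theorem~\ref{thm:FG-continuous-implies-order-continuous} applied on $Z$ with the restricted family $\{F|_{Z_+}\}$, which maps into $Z_+$ by hypothesis; alternatively it can be checked directly using the hypothesis $\mathcal F\subseteq\operatorname{IS}_o(X)$ and Lemma~\ref{lem:order-bounded-F-null}.
\item[(e)] With $T$ order continuous, $\widehat{T}$ coincides with the classical Veksler extension: equivalently, $\widehat{T}(x)=\lim Tz_\alpha$ for any net $Z_+\ni z_\alpha\uparrow x$. This gives additivity, and $\widehat T$ extends to a positive linear operator $X\to Y$ in the usual way.
\end{enumerate}
This identification with the classical theory (Veksler/Aliprantis--Burkinshaw) is the step where I would lean on known results rather than redo them.

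\textbf{$\mathcal{FG}$-order continuity.} Next we show that $\widehat{T}$ is $\mathcal{FG}$-order continuous on $X$.
\begin{enumerate}
\item[(a)] Apply Theorem~\ref{thm:supremum-envelope} with $\Phi=T|_{Z_+}$. It requires that $z_\lambda\xrightarrow{\mathcal F\text{-}o}0$ in $X$ imply $Tz_\lambda\xrightarrow{\mathcal G\text{-}o}0$, whereas $T$ is assumed $\mathcal{FG}$-order continuous with respect to convergence in $Z$, i.e.\ $F(z_\lambda)\xrightarrow{o}0$ in $Z$.
\item[(b)] Because $F(z_\lambda)\in Z_+$ and $Z$ is order dense and majorizing, $Z$ is regular in $X$. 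Hence order convergence in $X$ of a net lying in $Z$ is equivalent to order convergence in $Z$: the upper bound from majorization gives eventual order boundedness in $Z$, and then \cite[Corollary~2.9]{GaoTroitskyXanthos17}-type arguments apply via the common Dedekind completion. So the two hypotheses match.
\item[(c)] Theorem~\ref{thm:supremum-envelope} then yields $\widehat T(x_\alpha)\xrightarrow{\mathcal G\text{-}o}0$ for every net $0\le x_\alpha\xrightarrow{\mathcal F\text{-}o}0$.
\item[(d)] For a general net, use $|\widehat T x_\alpha|\le\widehat T|x_\alpha|$ together with monotonicity of each $G$; this gives $\mathcal{FG}$-order continuity of $\widehat T$.
\end{enumerate}

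\textbf{Uniqueness.} Let $S$ be another positive $\mathcal{FG}$-order continuous extension. By Theorem~\ref{thm:FG-continuous-implies-order-continuous}, applicable since $\mathcal F\subseteq\operatorname{IS}_o(X)$ and $\mathcal G$ has kernel separation while $S$ is positive hence order bounded, $S$ is order continuous. For $x\in X_+$, order density gives a net $Z_+\ni z_\alpha\uparrow x$ (take the directed set $\{z\in Z_+:z\le x\}$). Then $Sx=\sup Sz_\alpha=\sup Tz_\alpha=\widehat Tx$, and so $S=\widehat T$.

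\textbf{Main obstacle.} I expect the regularity and transfer step, matching $\mathcal F$-order convergence of nets in $Z$ computed in $Z$ versus in $X$, together with verifying order continuity of $T$ on $Z$, to be the delicate part.
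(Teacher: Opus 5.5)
Your proposal is correct and takes essentially the same route as the paper. It gets order continuity of $T$ on $Z$ from Theorem~\ref{thm:FG-continuous-implies-order-continuous}, takes the extension and its formula from Veksler's theorem, obtains $\mathcal{FG}$-order continuity from Theorem~\ref{thm:supremum-envelope} with $\Phi=T|_{Z_+}$, and proves uniqueness via Theorem~\ref{thm:FG-continuous-implies-order-continuous} together with Veksler. The differences are minor: you justify the transfer of $\mathcal F$-order convergence from $X$ to $Z$ by hand (regularity, majorization and the common Dedekind completion) where the paper cites Tantrawan's Theorem~15, and you pass explicitly from positive to arbitrary nets via $|\widehat T x_\alpha|\le\widehat T|x_\alpha|$, a step the paper leaves implicit.
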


\begin{proof}
Since $T$ is positive, it is order bounded. Moreover, the restrictions
of the members of $\mathcal F$ to $Z_+$ belong to
$\operatorname{IS}_o(Z)$. Hence
Theorem~\ref{thm:FG-continuous-implies-order-continuous} implies that
$T$ is order continuous.

By Veksler's extension theorem
\cite[Theorem~1.65]{AliprantisBurkinshaw06}, the formula
\[
\widehat T(x)
=
\sup\{T(u):u\in Z,\ 0\leq u\leq x\},
\qquad x\in X_+,
\]
defines the unique positive order continuous extension
$\widehat T:X\to Y$ of $T$.

Define $\Phi:Z_+\to Y_+$ by $\Phi(u)=T(u)$. Let
$(z_\lambda)\subseteq Z_+$ satisfy
\[
z_\lambda\xrightarrow{\mathcal F\text{-}o}0
\quad\text{in }X.
\]
Since $Z$ is order dense and majorizing,
\cite[Theorem~15]{Tantrawan25} yields
\[
z_\lambda\xrightarrow{\mathcal F\text{-}o}0
\quad\text{in }Z.
\]
Thus
\[
\Phi(z_\lambda)=Tz_\lambda
\xrightarrow{\mathcal G\text{-}o}0.
\]

Since $Z$ is majorizing in $X$, for every $x\in X_+$ there exists
$w\in Z_+$ such that $x\leq w$. Hence, if $z\in Z$ and
$0\leq z\leq x$, then
\[
0\leq Tz\leq Tw,
\]
because $T$ is positive. Therefore,
\[
\{Tz:z\in Z,\ 0\leq z\leq x\}
\]
is order bounded in $Y$.

It now follows from Theorem~\ref{thm:supremum-envelope} that
\[
\widehat T(x_\alpha)
\xrightarrow{\mathcal G\text{-}o}0
\]
whenever
\[
0\leq x_\alpha\xrightarrow{\mathcal F\text{-}o}0
\quad\text{in }X.
\]
Hence $\widehat T$ is $\mathcal{FG}$-order continuous.

The uniqueness follows from
Theorem~\ref{thm:FG-continuous-implies-order-continuous}
and Veksler's extension theorem.
\end{proof}

Part \textup{(i)} of the following corollary recovers
\cite[Proposition~5]{TuranGurkok24}.

\begin{corollary}\label{cor:extension_special_cases}
Let $Z$ be an order dense majorizing Riesz subspace of a Riesz
space $X$, and let $Y$ be a Dedekind complete Riesz space. Then the
following statements hold.
\begin{enumerate}
\item[(i)] Every positive $uo$-continuous operator $T:Z\to Y$
has a unique positive $uo$-continuous extension to $X$.

\item[(ii)] Every positive strongly order continuous operator
$T:Z\to Y$ has a unique positive strongly order continuous
extension to $X$.

\item[(iii)] Suppose that $X$ and $Y$ are $f$-algebras, $Z$ is an
$f$-subalgebra of $X$, and $Y$ is semiprime. Then every positive
$mo$-continuous operator $T:Z\to Y$ has a unique positive
$mo$-continuous extension to $X$.
\end{enumerate}
\end{corollary}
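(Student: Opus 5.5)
The plan is to derive each part from Theorem~\ref{thm:extension-theorem} by specializing the families as in Example~\ref{ex:special_cases_FG}. In each case we check four things: that $\mathcal F\subseteq\operatorname{IS}_o(X)$, that $F(Z_+)\subseteq Z_+$ for every $F\in\mathcal F$, that $\mathcal G\subseteq\operatorname{IS}_{\vee}(Y)$, and that $\mathcal G$ has the kernel separation property. One subtlety is the identification of the families on $Z$ and on $X$. The continuity of $T:Z\to Y$ is stated in terms of $uo$- (or $mo$-) convergence in $Z$, whereas the theorem is phrased via $\mathcal F\subseteq\operatorname{IS}(X)$ restricted to $Z$. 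I would therefore use \cite[Theorem~15]{Tantrawan25} (exactly as in the proof of Theorem~\ref{thm:extension-theorem}) together with the known fact that $uo$-convergence passes between $Z$ and $X$ for order dense (regular) sublattices \cite{GaoTroitskyXanthos17}. This shows that $\mathcal F$-order convergence of nets in $Z$ computed in $X$ agrees with the intrinsic convergence in $Z$.

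For (i), take $\mathcal F=\mathcal T_X$ and $\mathcal G=\mathcal T_Y$. Each $T_u(x)=x\wedge u$ is order continuous, so $\mathcal T_X\subseteq\operatorname{IS}_o(X)$, and $\mathcal T_Y\subseteq\operatorname{IS}_\vee(Y)$ by the infinite distributive law. Kernel separation is immediate: if $y\wedge u=0$ for all $u\in Y_+$, then taking $u=y$ gives $y=0$. The difficulty is the invariance $F(Z_+)\subseteq Z_+$, since $z\wedge u$ with $u\in X_+\setminus Z$ need not lie in $Z$. I would circumvent this by replacing $\mathcal T_X$ with the subfamily $\{T_w:w\in Z_+\}$. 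Because $Z$ is majorizing, for each $u\in X_+$ there is $w\in Z_+$ with $u\le w$, so $|x_\alpha|\wedge u\le|x_\alpha|\wedge w$. Hence this subfamily induces exactly $uo$-convergence on $X$, and now $z\wedge w\in Z_+$. Theorem~\ref{thm:extension-theorem} then yields the unique positive $uo$-continuous extension.

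For (ii), take $\mathcal F=\{T_w:w\in Z_+\}$ as above and $\mathcal G=\mathcal I_Y$; the latter trivially lies in $\operatorname{IS}_\vee(Y)$ and has kernel separation. For (iii), take $\mathcal F=\{M_w:w\in Z_+\}$ and $\mathcal G=\mathcal M_Y$. Here $zw\in Z_+$ because $Z$ is an $f$-subalgebra. Majorization again gives $|x_\alpha|u\le|x_\alpha|w$ when $u\le w$, so this subfamily induces $mo$-convergence on $X$. The family $\mathcal M_Y\subseteq\operatorname{IS}_\vee(Y)$ by \cite[Proposition~2.9]{AydinEmelyanovGorokhova25}, and it has kernel separation since $Y$ is semiprime. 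The remaining requirement is $M_w\in\operatorname{IS}_o(X)$, which follows from order continuity of multiplication in Archimedean $f$-algebras; this is standard, e.g.\ it follows from the supremum-preserving property plus the fact that multiplication by a positive element is an order bounded disjointness preserving (hence order continuous) operator.

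The main obstacle is the identification of $\mathcal F$-order convergence on $Z$ (as used in the hypothesis on $T$) with the intrinsic $uo$- or $mo$-convergence of $Z$. In the $mo$ case this also requires that $mo$-convergence in $Z$ with multipliers from $Z_+$ agrees with that computed in $X$. I would first try to settle it via the regularity of the order dense subspace $Z$, using \cite[Corollary~2.9]{GaoTroitskyXanthos17} for order convergence, since $Z$ is regular in $X$ and order nullity of $|z_\lambda|w$ (a net in $Z$) is the same in $Z$ and $X$.
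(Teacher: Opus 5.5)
Your proposal is correct and is essentially the paper's proof. The paper likewise replaces $\mathcal T_X$ and $\mathcal M_X$ by the families generated by $Z_+$ (its $\mathcal T_Z$ and $\mathcal M_Z$), uses majorization to see that these induce $uo$- and $mo$-convergence on $X$ while mapping $Z_+$ into $Z_+$, and then applies Theorem~\ref{thm:extension-theorem} with $\mathcal G=\mathcal T_Y$, $\mathcal I_Y$, $\mathcal M_Y$.

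One correction concerns your justification that $M_w\in\operatorname{IS}_o(X)$. Order bounded disjointness preserving operators need not be order continuous; the free-ultrafilter limit $\ell_\infty\to\mathbb R$ is a lattice homomorphism that is not order continuous. You should instead justify it by $M_w$ being band preserving (an orthomorphism), or, as you also note, by its being positive and supremum preserving.
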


\begin{proof}
For (i) and (ii), for each $z\in Z_+$ define
\[
F_z(x)=x\wedge z,\qquad x\in X_+,
\]
and set $\mathcal F=\mathcal T_Z$. Since $Z$ is majorizing in $X$,
$\mathcal T_Z$-order convergence and $uo$-convergence coincide in
$X$. Taking $\mathcal G=\mathcal T_Y$ gives (i), while taking
$\mathcal G=\mathcal I_Y$ gives (ii), by
Theorem~\ref{thm:extension-theorem}.

For (iii), for each $z\in Z_+$ define $M_z(x)=zx$ and set
$\mathcal F=\mathcal M_Z$. Since $Z$ is majorizing in $X$,
$\mathcal M_Z$-order convergence and $mo$-convergence coincide in
$X$. Taking $\mathcal G=\mathcal M_Y$, the result follows from
Theorem~\ref{thm:extension-theorem}.
\end{proof}

Let $T:X\to Y$ be a positive operator between Riesz spaces, where
$Y$ is Dedekind complete. For every ideal $A$ of $X$, the formula
\[
T_A(x)=\sup\{T(u):u\in A\cap[0,x]\},
\qquad x\in X_+,
\]
defines a positive operator $T_A:X\to Y$.
The following result, analogous to
\cite[Theorem~1.64]{AliprantisBurkinshaw06}, characterizes when a
positive operator is $\mathcal{FG}$-order continuous on a given ideal.

\begin{theorem}\label{thm:ideal-operator-continuity}
Let $A$ be an ideal of a Riesz space $X$, and let $Y$ be a
Dedekind complete Riesz space. Let
$\mathcal F\subseteq\operatorname{IS}(X)$ be such that every
$F\in\mathcal F$ is bounded on $A_+$ and satisfies
\[
F(A_+)\subseteq A_+.
\]
Let
\[
\mathcal G\subseteq\operatorname{IS}_{\vee}(Y)
\]
have the kernel separation property.
Let $T:X\to Y$ be a positive operator. Then $T$ is
$\mathcal{FG}$-order continuous on $A$ if and only if
$T_A$ is $\mathcal{FG}$-order continuous on $X$.
\end{theorem}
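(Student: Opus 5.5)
The plan is to reduce both directions to the relationship between $\mathcal F$-order convergence in the ideal $A$ and in $X$, and to obtain the nontrivial direction from Theorem~\ref{thm:supremum-envelope} with $Z=A$. Here ``$T$ is $\mathcal{FG}$-order continuous on $A$'' means that the restriction $T|_A:A\to Y$ is $\mathcal{FG}$-order continuous. The family on $A$ is $\{F|_{A_+}:F\in\mathcal F\}$; this lies in $\operatorname{IS}(A)$ because $F(A_+)\subseteq A_+$.

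\emph{Transfer step.} This is the step I expect to be the main obstacle. I will show that for a net $(x_\alpha)$ in $A$,
\[
x_\alpha\xrightarrow{\mathcal F\text{-}o}0 \text{ in } A
\quad\Longleftrightarrow\quad
x_\alpha\xrightarrow{\mathcal F\text{-}o}0 \text{ in } X .
\]
For ``$\Rightarrow$'': order convergence in an ideal implies order convergence in $X$, because a net $z_\beta\downarrow0$ in $A$ also satisfies $z_\beta\downarrow0$ in $X$ (ideals are regular). Apply this to $F(|x_\alpha|)$.

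For ``$\Leftarrow$'': I will use the standing hypothesis that each $F$ is bounded on $A_+$, which I read as $F(A_+)\subseteq[0,z_F]$ for some $z_F\in A_+$. Then $\bigl(F(|x_\alpha|)\bigr)$ is an order bounded net in the regular sublattice $A$. For order bounded nets in a regular sublattice, order convergence in $X$ implies order convergence in $A$; this is the same regularity fact from \cite{GaoTroitskyXanthos17} used in the proof of Lemma~\ref{lem:order-bounded-F-null}. Concretely, one can take the dominating net $\bigl(z_\beta\wedge z_F\bigr)$, which lies in $A$ by the ideal property, and then use regularity to see that it decreases to $0$ in $A$.

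\emph{($\Rightarrow$) direction.} Assume $T|_A$ is $\mathcal{FG}$-order continuous. Apply Theorem~\ref{thm:supremum-envelope} with $Z=A$ and $\Phi=T|_{A_+}$.
\begin{itemize}
\item Hypothesis on $\Phi$: if $(z_\lambda)\subseteq A_+$ is $\mathcal F$-order null in $X$, the transfer step makes it $\mathcal F$-order null in $A$, so $Tz_\lambda\xrightarrow{\mathcal G\text{-}o}0$.
\item Order boundedness: for $x\in X_+$ the set $\{Tz:z\in A\cap[0,x]\}$ lies in $[0,Tx]$, since $T$ is positive.
\end{itemize}
Since $\widehat\Phi=T_A$ on $X_+$, the theorem gives $T_A(x_\alpha)\xrightarrow{\mathcal G\text{-}o}0$ whenever $0\le x_\alpha\xrightarrow{\mathcal F\text{-}o}0$. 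For an arbitrary net $x_\alpha\xrightarrow{\mathcal F\text{-}o}0$, we have $|x_\alpha|\xrightarrow{\mathcal F\text{-}o}0$ and $|T_Ax_\alpha|\le T_A|x_\alpha|$. Monotonicity of each $G$ then yields $T_Ax_\alpha\xrightarrow{\mathcal G\text{-}o}0$.

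\emph{($\Leftarrow$) direction.} Assume $T_A$ is $\mathcal{FG}$-order continuous on $X$. For $u\in A_+$ we have $[0,u]\subseteq A$, so $T_A u=Tu$, and by linearity $T_A=T$ on $A$. Let $(x_\alpha)$ be a net in $A$ that is $\mathcal F$-order null in $A$. By the transfer step it is $\mathcal F$-order null in $X$. Hence $Tx_\alpha=T_Ax_\alpha\xrightarrow{\mathcal G\text{-}o}0$, which is exactly $\mathcal{FG}$-order continuity of $T$ on $A$.
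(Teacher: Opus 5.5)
Your proposal is correct and follows the paper's proof essentially step for step. Like you, the paper uses Theorem~\ref{thm:supremum-envelope} with $Z=A$ and $\Phi=T|_{A_+}$ for one direction, and $T_A=T$ on $A$ for the other; the only difference is that the paper cites Tantrawan's Theorem~15 and Corollary~16 for your transfer step, whereas you verify it directly using the dominating net $z_\beta\wedge z_F\in A$. Your reading of ``bounded on $A_+$'' with $z_F\in A_+$ is exactly what that step needs, and regularity is not actually used there, since a subset of $A$ with infimum $0$ in $X$ automatically has infimum $0$ in $A$.
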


\begin{proof}
Suppose first that $T_A$ is $\mathcal{FG}$-order continuous on $X$.
Let
\[
x_\alpha\xrightarrow{\mathcal F\text{-}o}0
\quad\text{in }A.
\]
Since every ideal is regular,
\cite[Theorem~15]{Tantrawan25} yields
\[
x_\alpha\xrightarrow{\mathcal F\text{-}o}0
\quad\text{in }X.
\]
As $T_A=T$ on $A$, it follows that
\[
Tx_\alpha=T_Ax_\alpha
\xrightarrow{\mathcal G\text{-}o}0.
\]
Thus $T$ is $\mathcal{FG}$-order continuous on $A$.

Conversely, suppose that $T$ is $\mathcal{FG}$-order continuous
on $A$, and define
\[
\Phi:A_+\to Y_+,
\qquad
\Phi(u)=T(u).
\]
Let $(z_\lambda)\subseteq A_+$ satisfy
\[
z_\lambda\xrightarrow{\mathcal F\text{-}o}0
\quad\text{in }X.
\]
Since every ideal is regular, every $F\in\mathcal F$ is bounded on
$A_+$, and $F(A_+)\subseteq A_+$,
\cite[Corollary~16]{Tantrawan25} yields
\[
z_\lambda\xrightarrow{\mathcal F\text{-}o}0
\quad\text{in }A.
\]
Hence
\[
\Phi(z_\lambda)=Tz_\lambda
\xrightarrow{\mathcal G\text{-}o}0.
\]

For every $x\in X_+$,
\[
0\leq T(u)\leq T(x)
\qquad
(u\in A\cap[0,x]),
\]
so
\[
\{\Phi(u):u\in A\cap[0,x]\}
\]
is order bounded in $Y$. By
Theorem~\ref{thm:supremum-envelope},
\[
T_A(x_\alpha)
=
\sup\{\Phi(u):u\in A\cap[0,x_\alpha]\}
\xrightarrow{\mathcal G\text{-}o}0
\]
whenever
\[
0\leq x_\alpha\xrightarrow{\mathcal F\text{-}o}0
\quad\text{in }X.
\]
Thus $T_A$ is $\mathcal{FG}$-order continuous on $X$.
\end{proof}

As immediate consequences of
Theorem~\ref{thm:ideal-operator-continuity}, we obtain the
corresponding results for $uo$-continuous and strongly order
continuous operators. The $uo$-continuous case recovers
\cite[Theorem~8]{TuranGurkok24}.

\section*{Declarations}

\paragraph{Funding}
The author received no funding for this work.

\paragraph{Competing interests}
The author declares no competing interests.

\paragraph{Data availability}
No datasets were generated or analyzed during the current study.

\paragraph{Author contributions}
The author is solely responsible for the conception, analysis,
and preparation of the manuscript.

\end{document}